\numberwithin{equation}{section}
\theoremstyle{plain}
\newtheorem{theorem}{Theorem}[section]
\theoremstyle{definition}
\newtheorem{proposition}{Proposition}[section]
\theoremstyle{remark}
\newtheorem{remark}{Remark}[section]
\newcommand{\Real}{\mathbb R}
\newcommand{\F}{{\mathcal F}}
\newcommand{\E}{{\mathcal E}}
\newcommand{\la}{\langle}
\newcommand{\ra}{\rangle}
\newcommand{\half}{\frac{1}{2}}
\newcommand{\eps}{\epsilon}
\newcommand{\ignore}[1]{}
\begin{document}

\title[A Generalized Quasi-Nonlocal Coupling Method with Finite Range Interaction]
{A Generalized Quasi-Nonlocal Atomistic-to-Continuum Coupling Method with Finite Range Interaction}

\author{Xingjie Helen Li and Mitchell Luskin}

\date{\today}


\subjclass[2000]{65Z05,70C20}

\begin{abstract}
The accurate and efficient computation of the deformation of
crystalline solids requires the coupling of atomistic models
near lattice defects such as cracks and dislocations with
coarse-grained models away from the defects. Quasicontinuum
methods utilize a strain energy density derived from the
Cauchy-Born rule for the coarse-grained model.

Several quasicontinuum methods have been proposed to couple the
atomistic model with the Cauchy-Born strain energy density.
The quasi-nonlocal coupling method is easy to implement and
achieves a reasonably accurate coupling for short range
interactions.  In this paper, we give a new formulation of the
quasi-nonlocal method in one space dimension that allows its
extension to arbitrary finite range interactions. We also give
an analysis of the stability and accuracy of a linearization of
our generalized quasi-nonlocal method that holds for strains up
to lattice instabilities.

\end{abstract}

\maketitle

\section{Introduction}
The motivation for multiscale coupling methods is that the accuracy of
a fine scale model is often only needed in localized regions of the computational domain containing defects,
but a coarse-grained continuum model is needed to make efficient the computation of  large enough systems.
The first quasicontinuum approximation was the energy-based quasicontinuum model (denoted QCE \cite{Ortiz:1995a}).
However, there are interfacial forces (called ``ghost forces'') when a uniform strain is
modeled by the QCE energy~\cite{Shenoy:1999a,Dobson:2008a}, or equivalently, the uniform
strain is not an equilibrium solution for the QCE energy (even though the uniform strain is an equilibrium for
the purely atomistic and coarse-grained continuum models). The effect of the ghost force on
the error in computing the deformation and the lattice stability by the QCE approximation has been analyzed
in~\cite{Dobson:2008a,Dobson:2008c,mingyang,doblusort:qce.stab}.

Thus, there is a need for a more accurate atomistic-to-continuum coupling
method. The ghost force correction method (GFC) achieves an increased accuracy
by adding a correction to the ghost forces as a dead load during a quasistatic
process~\cite{Shenoy:1999a,Dobson:2008a,dobsonluskin08,Miller:2008,qcf.iterative}.
More accurate coupling can be achieved by a force-based
approximation~\cite{doblusort:qcf.stab,dobs-qcf2,curt03,Miller:2003a},
but the non-conservative equilibrium equations make the iterative
solution and the determination
of lattice stability more challenging~\cite{qcf.iterative}.

An alternative approach is to develop a quasicontinuum energy that
is more accurate than QCE.  The quasi-nonlocal energy (QNL) was the first
quasicontinuum energy without ghost forces in the atomistic-to-continuum interface
for a uniformly strained lattice~\cite{Shimokawa:2004}.
The developers of the QNL method introduced interfacial atoms, which interacted
with the atomistic region using the atomistic model and interacted with the continuum region
using the continuum model.  For a one dimensional chain, the original QNL method is restricted to
next-nearest-neighbor interactions.
In this paper, we formulate the one-dimensional QNL energy in terms of
interactions rather than the energy contributions of individual atoms~\cite{doblusort:qce.stab},
which allows us to generalize the original QNL energy to arbitrary finite range interactions.
A closely related method has been independently proposed
and studied numerically for one and two dimensional problems by Shapeev in~\cite{shapeev}.

We also give an analysis of the
stability and accuracy of a linearization of the generalized quasi-nonlocal method
in the one dimensional case.
The stability and optimal order error analysis for a linearization
of the original QNL model with second nearest-neighbor interaction
was analyzed for the one dimensional case in \cite{Dobson:2008b}.
A nonlinear {\it a priori} and {\it a posteriori} error analysis
for the QNL model with second nearest-neighbor interaction in one dimension was
given in ~\cite{ortner:qnl1d}.

In Section~\ref{lqc}, we describe the coarse-grained local QC energy and the
motivation for the development of more accurate QC energies.

In Section~\ref{notation}, we present the notation used in this paper. We define the displacement space $\mathcal{U}$
and the deformation space $\mathcal{Y}_{F}$. We then introduce the norms we will use
to estimate the consistency error and the displacement gradient error.

In Section~\ref{gqnl}, we introduce the general QNL energy with
finite range interaction and give an expression of the energy
and equilibrium equations associated with the model.

In Section~\ref{sharp}, we give sharp stability estimates for both
the fully atomistic model and the generalized QNL model with respect to a uniform strain.
Sharp stability estimates are necessary to determine whether quasicontinuum methods
(or other coupling methods) are accurate near instabilities
such as defect formation or crack propagation~\cite{doblusort:qce.stab,doblusort:qcf.stab}.
Similar stability estimates of the
fully atomistic and fully local quasi-continuum (QCL) models have been studied by discrete Fourier analysis
in \cite{HudsOrt:a}.

In section~\ref{convergence}, we study the convergence rate of the generalized QNL model.
We compare the equilibrium solution of the generalized QNL model
with that of the fully atomistic model, and we
use the negative norm estimation method \cite{dobs-qcf2} to obtain an optimal rate of
convergence of the strain error.  The error estimate depends only on the smoothness
of the strain in the continuum region and holds near lattice instabilities,
thus demonstrating that the generalized
QNL method can give a small error if defects are captured in the atomistic region.

\section{The Local QC Approximation and its Error}\label{lqc}
In this section, we first briefly describe the $1$D atomistic chain model.
We give the fully atomistic energy, $\mathcal{E}^{a}(\mathbf{Y}),$ and then use linear interpolation
to derive the coarse-grained local QC energy, $\mathcal{E}^{qc}(\mathbf{Y}).$
We next compute and analyze the difference between these two energies. We then rescale the model
and conclude that the coarse-grained local QC energy is formally a second order approximation
(where the small parameter is lattice spacing scaled by
the size of the macroscopic domain) to the fully atomistic energy when the strain gradient
is small.

For simplicity, we assume that the infinite atomistic chain with positions $y_{\ell}<y_{\ell+1}$
has period $2N,$ that is,
\[
y_{\ell+1+2N}-y_{\ell+2N}=y_{\ell+1}-y_{\ell}\quad \forall \ell\in
\mathbb{Z},
\]
where $\mathbb{Z}$ is the set of integers.
We compute the energies of
interaction using a potential $\tilde\phi:(0,\infty]\rightarrow\Real$ which
we assume satisfies
\begin{enumerate}
\item $\tilde\phi \in C^4((0,\infty], \Real);$
\item there exists $r^*$ so that $\tilde{\phi}''(r) \ge 0$ for all $r
    \in (0, r^*]$, and $\tilde\phi''(r) < 0$ for all $r \in
    (r^*,\infty);$
\item
$\tilde\phi$ and its derivatives decay at infinity.
\end{enumerate}

Examples of commonly used interaction potentials which satisfy the
above conditions include the Lennard-Jones potential and the Morse
potential.
The total stored atomistic energy per period
for the interaction potential, $\tilde\phi(r),$ with up to $s$th nearest-neighbor interactions is given by
\begin{align}
\begin{split}
  {\E}^{a}(\mathbf{y})&:=
\sum_{k=1}^{s}\sum_{\ell=-N+1}^{N}\tilde\phi(y_{\ell+k-1}-y_{\ell-1}).
\end{split}
\end{align}
We note that $\mathcal{E}^{a}(\mathbf{y})$ can be rewritten as
\begin{align}\label{EnergyMultiNbhd1}
\begin{split}
 \mathcal{E}^{a}(\mathbf{y})&=\sum_{k=1}^{s}\sum_{\ell=-N+1}^{N}\tilde\phi\big(k(y_{\ell}-y_{\ell-1})\big)\\
&\quad+\sum_{k=2}^{s}\sum_{\ell=-N+1}^{N}\left\{\tilde\phi(y_{\ell+k-1}-y_{\ell-1})
-\frac{1}{k}\sum_{t=0}^{k-1}\tilde\phi\big(k(y_{\ell+t}-y_{\ell+t-1})\big)\right\}\\
&=\sum_{\ell=-N+1}^{N}\tilde\phi_{cb}(y_{\ell}-y_{\ell-1})\\
&\quad+\sum_{k=2}^{s}\sum_{\ell=-N+1}^{N}\left\{\tilde\phi(y_{\ell+k-1}-y_{\ell-1})
-\frac{1}{k}\sum_{t=0}^{k-1}\tilde\phi\big(k(y_{\ell+t}-y_{\ell+t-1})\big)\right\}
\end{split}
\end{align}
where
\[
  \tilde\phi_{cb}(\tilde r) :=\sum_{k=1}^{s}\tilde\phi(k\,\tilde r)
\]
is  the
\textit{Cauchy-Born}
energy density \cite{Ortiz:1995a,Dobson:2008b}.

Intuitively, we can reduce the total amount of computation by first
choosing $2M$, $M<<N$, representative atoms in one period, linearly
interpolating the positions of the remaining atoms, and then computing the
total atomistic energy by using the interpolated positions. More precisely,
we introduce the representative atoms with positions $Y_{j}$ such
that
\begin{align*}
 Y_{j}=y_{\ell_{j}},\quad j=-M,\dots,M,
\end{align*}
where the subindex $\ell_j$ for $j=-M,\dots,M$ satisfies
\[
-N=\ell_{-M}<\dots<\ell_{j-1}<\ell_{j}<\dots<\ell_{M}=N.
\]
The representative atoms thus satisfy
\[
Y_{-M}<\dots<Y_{j-1}<Y_j<\dots<Y_M.
\]
We
denote the number of atoms between $Y_{j-1}$ and $Y_{j}$ as $\tilde \nu_{j},$ and
the distance separating the
$\tilde \nu_{j}$ equally spaced atoms between $Y_{j-1}$ and $Y_{j}$
as $\tilde r_j,$ that is,
\[
\tilde \nu_{j}:=\ell_{j}-\ell_{j-1}\quad\text{ and }\quad
\tilde r_j=\left(Y_{j}-Y_{j-1}\right)/\tilde \nu_{j}\quad\text{for }
j=-M+1,\dots,M.
\]
Then the positions of the atoms between $Y_{j-1}$
and $Y_j$ can be approximated by
\begin{equation}\label{int}
 y_{\ell_{j-1}+i}=y_{\ell_{j}-(\tilde \nu_{j}-i)}=Y_{j-1}+i\,\tilde r_{j}=
 \frac{\tilde \nu_{j}-i}{\tilde \nu_{j}}Y_{j-1}+\frac{i}{\tilde \nu_{j}}Y_{j}, \qquad 0\le i\le\tilde \nu_{j}.
\end{equation}
We thus have that
\[
Y_{j-1}=y_{\ell_{j-1}}<y_{\ell_{j-1}+1}<\dots<y_{\ell_{j}-(\tilde \nu_{j}-1)}<y_{\ell_{j}}=Y_j,\qquad 0\le i\le\tilde \nu_{j}.
\]

We further assume that each $\tilde\nu_{j}$ is sufficiently large, i.e.,
$\tilde\nu_{j}\ge s$ for $j=-M+1,\dots,M$, so that the
the span of a bond can cross at most one interface.
We can then observe for the atomistic deformations given by the interpolation ~\eqref{int} that
\begin{align*}
 &\tilde\phi(y_{\ell+k-1}-y_{\ell-1})-\frac{1}{k}\sum_{t=0}^{k-1}\tilde\phi(k(y_{\ell+t}-y_{\ell+t-1}))\\
&\quad=\begin{cases}
0, & \,\ell_{j-1}<\ell<\ell_{j}-(k-2),\\
\tilde\phi\big(\,(p+1)\tilde r_{j}+(k-p-1)\,\tilde r_{j+1}\,\big)-\frac{p+1}{k}\tilde\phi(k\,
\tilde r_{j})
-\frac{k-p-1}{k}\tilde\phi(k\,\tilde r_{j+1}), & \, \ell=\ell_{j}-p,\ 0\le p\le k-2.\\
\end{cases}
\end{align*}
Rearranging the terms in \eqref{EnergyMultiNbhd1} and applying
the equalities above, we can rewrite the total atomistic energy
$\mathcal{E}^{a}(\mathbf{Y})$ as the sum of a coarse-grained local QC energy,
$\mathcal{ E}^{qc}(\mathbf{Y}),$
and an interfacial energy:
\begin{align}\label{EnergyMultiNbhd2}
\begin{split}
    \mathcal{E}^{a}(\mathbf{Y})&=\sum_{k=1}^{s}\sum_{\ell=-N+1}^{N}\tilde\phi\big(k(y_{\ell}-y_{\ell-1})\big)\\
&\quad+\sum_{k=2}^{s}\sum_{\ell=-N+1}^{N}\left\{\tilde\phi(y_{\ell+k-1}-y_{\ell-1})
-\frac{1}{k}\sum_{t=0}^{k-1}\tilde\phi\big(k(y_{\ell+t}-y_{\ell+t-1})\big)\right\}\\
&=\mathcal{ E}^{qc}(\mathbf{Y})
+\sum_{j=-M+1}^{M}{\mathcal{P}}_{j},
\end{split}
\end{align}
where the coarse-grained local QC model is
\begin{equation}\label{localQCenergy}
 \mathcal{ E}^{qc}(\mathbf{Y}):=\sum_{j=-M+1}^{M}\tilde \nu_{j}\,\tilde\phi_{cb}(\tilde r_{j}),
\end{equation}
and
where the interfacial energy is
\begin{align*}
 {\mathcal{P}}_{j}&:=\sum_{k=2}^{s}
\sum_{p=1}^{k-1}\Big\{\tilde\phi\left(
p\tilde r_{j}+(k-p)\tilde r_{j+1}\right)
-\frac{p}{k}\tilde\phi(k\,\tilde r_{j})-\frac{k-p}{k}\tilde\phi(k\,\tilde r_{j+1})\Big\},\quad
j=-M+1,\dots, M.
\end{align*}
We note that we have used the
periodicity for the representative atoms chosen in one period. Therefore, we have
that $\tilde r_{M+1}=\tilde r_{-M+1}$ and
the definition of $\mathcal{P}_{M}$ makes sense.

To understand the difference between the atomistic energy
$\mathcal{E}^{a}(\mathbf{Y})$ and the coarse-grained local QC energy
$\mathcal{ E}^{qc}(\mathbf{Y})$, we first evaluate the interfacial
energy terms, namely ${\mathcal{P}}_{j},\, j=-M+1,\dots,M$. Using
the Lagrange form of the Taylor expansion of degree $2$ at
$\frac{k}{2}(\tilde r_{j+1}+\tilde r_{j})$, we can obtain that
$\mathcal{P}_{j}$ is given by
\begin{align}\label{EnergyMultiNbhd3}
\begin{split}
 \mathcal{P}_{j} &= \sum_{k=2}^{s}
\left\{\frac{-k^3+k}{12} \tilde\phi''\bigg(
k\big(\eta_{jk}\tilde r_{j}+(1-\eta_{jk})\tilde r_{j+1}\big)\bigg) (\tilde r_{j+1}-\tilde r_{j})^2\right\},
 \quad j=-M+1,\dots, M,
\end{split}
\end{align}
and
$\eta_{jk}$ is a scalar satifying $0\le\eta_{jk}\le 1.$ 
\subsection{Scaled Models}
We next consider
a scaled version of the atomistic and local QC energies. Thus, we
define the two scaled interaction potentials $\phi(r)$ and $\phi_{cb}(r)$ by
\[
\phi(r)=\frac{1}{\epsilon} \tilde\phi(r \epsilon)
\quad\text{and}\quad
\phi_{cb}(r)=\frac{1}{\epsilon} \tilde\phi_{cb}(r \epsilon)
\]
where $\epsilon>0$ scales the reference lattice.
We can now convert
\eqref{EnergyMultiNbhd3} to the scaled form:
\begin{align}\label{DimLess1}
\begin{split}
 \mathcal{P}_{j}
&= \epsilon \sum_{k=2}^{s}\left\{ \frac{-k^3+k}{12}  \phi''
\bigg(
k\big(\eta_{jk} r_{j}+(1-\eta_{jk}) r_{j+1}\big)\bigg)
\,(r_{j}-r_{j+1})^2
\right\}
\end{split}
\end{align}
where $r_j:=\tilde r_j/\eps$ and
$\eta_{jk}$ is a scalar satifying $0\le\eta_{jk}\le 1.$

Therefore, the total atomistic energy \eqref{EnergyMultiNbhd2} has
the scaled form
\begin{align}\label{DimlessEnergy1}
\begin{split}
{\E}^{a}(\mathbf{Y})&=\sum_{j=-M+1}^{M}
\tilde \nu_{j}\,\tilde\phi_{cb}(\,\tilde r_{j})+\sum_{j=-M+1}^{M}{\mathcal{P}}_{j}
\\
&=\sum_{j=-M+1}^{M}\nu_{j}\,\phi_{cb}(r_{j})+\epsilon\sum_{j=-M+1}^{M}(r_{j}-r_{j+1})^2
\sum_{k=2}^{s}\left\{\frac{-k^3+k}{12} \phi''\bigg(
k\big(\eta_{jk} r_{j}+(1-\eta_{jk}) r_{j+1}\big)\bigg) \right\}\\
&= \mathcal{E}^{qc}(\mathbf{Y})+\sum_{j=-M+1}^{M}
H_{j+1}C_{j}\left\{\epsilon H_{j+1} (Y_{j+1}'')^2\right\}
\end{split}
\end{align}
where
\[
 C_{j}:=\sum_{k=2}^{s}\left\{ \frac{-k^3+k}{12} \phi''\bigg(
k\big(\eta_{jk} Y_{j}'+(1-\eta_{jk}) Y_{j+1}'\big)\bigg)\right\},
\]
and
\begin{alignat*}{2}
X_j&:=\eps \ell_j, & H_j&:=(X_j-X_{j-2})/2, \\
\nu_j &:=\eps\tilde \nu_j, &Y_j'&:=\frac{Y_j-Y_{j-1}}{X_j-X_{j-1}}
=r_j,\\
Y_{j}''&:=\frac{Y_j'-Y_{j-1}'}{H_j}.&&
\end{alignat*}

\begin{proposition}
 The coarse-grained local QC energy $\mathcal{E}^{qc}(\mathbf{Y})$ is formally a second order
approximation to the fully atomistic energy $\mathcal{E}^{a}(\mathbf{Y})$, or more precisely,
\begin{equation*}\label{DimlessEnergy1a}
\begin{split}
 {\E}^{a}(\mathbf{Y})&=\mathcal{E}^{qc}(\mathbf{Y})+\sum_{j=-M+1}^{M}
H_{j+1}C_{j}\left\{\epsilon H_{j+1} (Y_{j+1}'')^2\right\}
,\\
\end{split}
\end{equation*}
where $H=\max_j H_j.$
\end{proposition}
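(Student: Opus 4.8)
The plan is to treat the asserted identity as an \emph{exact} algebraic rearrangement of the atomistic energy rather than as an asymptotic estimate; the qualifier ``formally'' refers only to the subsequent interpretation of the remainder sum as a higher-order term, once the discrete strain gradient $Y_{j+1}''$ is regarded as order one and the mesh-and-lattice factor multiplying it is treated as small. First I would start from the exact decomposition \eqref{EnergyMultiNbhd2}, which already splits $\mathcal{E}^{a}(\mathbf{Y})$ into the local QC energy $\mathcal{E}^{qc}(\mathbf{Y})$ defined in \eqref{localQCenergy} plus the interfacial contributions $\sum_{j}\mathcal{P}_{j}$. The entire content of the proposition then reduces to evaluating each $\mathcal{P}_{j}$ in closed form and re-expressing the result through the discrete derivatives $Y_{j+1}''$ and the mesh quantities $H_{j+1}$.

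The key step is the evaluation of $\mathcal{P}_{j}$ leading to \eqref{EnergyMultiNbhd3}. Writing $a=\tilde r_{j}$ and $b=\tilde r_{j+1}$, each summand of $\mathcal{P}_{j}$ has the form $\tilde\phi\bigl(pa+(k-p)b\bigr)-\tfrac{p}{k}\tilde\phi(ka)-\tfrac{k-p}{k}\tilde\phi(kb)$, which I would recognize as the convexity (Jensen) gap of $\tilde\phi$ at the convex combination $\tfrac{p}{k}(ka)+\tfrac{k-p}{k}(kb)$. Applying Taylor's theorem with the Lagrange form of the second-order remainder to this gap produces a factor $-\tfrac{p(k-p)}{2}(a-b)^{2}\tilde\phi''(\xi_{p})$ for an intermediate point $\xi_{p}$ between $ka$ and $kb$. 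Summing over $p$ then uses the elementary identity $\sum_{p=1}^{k-1}p(k-p)=\tfrac{k^{3}-k}{6}$, which yields exactly the prefactor $\tfrac{-k^{3}+k}{12}$ appearing in \eqref{EnergyMultiNbhd3}.

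With \eqref{EnergyMultiNbhd3} in hand, the remaining work is bookkeeping. I would rescale via $\phi(r)=\tfrac1\epsilon\tilde\phi(r\epsilon)$ and $r_{j}=\tilde r_{j}/\epsilon$ to pass to \eqref{DimLess1}, and then verify the single algebraic identity $H_{j+1}C_{j}\{\epsilon H_{j+1}(Y_{j+1}'')^{2}\}=\epsilon\,C_{j}\,(r_{j}-r_{j+1})^{2}$. This is immediate from the definitions, since $Y_{j+1}''=(Y_{j+1}'-Y_{j}')/H_{j+1}=(r_{j+1}-r_{j})/H_{j+1}$ gives $(Y_{j+1}'')^{2}=(r_{j}-r_{j+1})^{2}/H_{j+1}^{2}$, so the two explicit factors of $H_{j+1}$ cancel the $H_{j+1}^{2}$ in the denominator; one also checks that $C_{j}$ reproduces the $k$-sum in \eqref{DimLess1} because $Y_{j}'=r_{j}$ and $Y_{j+1}'=r_{j+1}$. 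Summing over $j$ then gives \eqref{DimlessEnergy1}, which is the claimed formula.

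The main obstacle is the collapsing of the many distinct Lagrange remainder points into the single intermediate argument $k(\eta_{jk}\tilde r_{j}+(1-\eta_{jk})\tilde r_{j+1})$ with one scalar $\eta_{jk}\in[0,1]$. This requires invoking the intermediate value theorem for $\tilde\phi''$, which is legitimate because hypothesis (1) guarantees $\tilde\phi\in C^{4}$ and hence $\tilde\phi''$ is continuous; both the two-point remainder inside each Jensen gap and the $p$-sum over $1\le p\le k-1$ must be consolidated this way. Care is also needed to confirm, as noted after \eqref{localQCenergy}, that periodicity ($\tilde r_{M+1}=\tilde r_{-M+1}$) makes the boundary term $\mathcal{P}_{M}$ well defined, so that the sum over $j$ closes up consistently.
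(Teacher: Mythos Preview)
Your proposal is correct and follows essentially the same route as the paper: the proposition has no separate proof in the paper but is simply a restatement of the derivation culminating in \eqref{DimlessEnergy1}, which proceeds exactly as you describe (decomposition \eqref{EnergyMultiNbhd2}, Taylor expansion about the midpoint $\tfrac{k}{2}(\tilde r_j+\tilde r_{j+1})$ to obtain \eqref{EnergyMultiNbhd3}, rescaling to \eqref{DimLess1}, and the algebraic substitution $Y_{j+1}''=(r_{j+1}-r_j)/H_{j+1}$). Your Jensen-gap framing and the explicit invocation of the intermediate value theorem to merge the Lagrange remainder points into a single $\eta_{jk}$ are in fact more careful than the paper, which asserts \eqref{EnergyMultiNbhd3} directly.
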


\section{Notation}
\label{notation}
Before further discussing the scaled models, we present the notation used in this paper.
We define the reference lattice
\[
\eps \mathbb{Z}:= \{\eps\ell : \ell\in\mathbb{Z}\},
\]
 where $\eps >0$
scales the reference atomic spacing and $\mathbb{Z}$ is the set of integers.  We then deform
the reference lattice $\eps \mathbb{Z}$
uniformly into the lattice
\[
F\eps \mathbb{Z}:= \{F\eps\ell : \ell\in\mathbb{Z}\},
\]
where
$F >0$ is the macroscopic deformation gradient, and we define the corresponding deformation
$\mathbf{y}_{F}$ by
\[
(\mathbf{y}_{F})_{\ell}:=F\eps \ell  \quad \text{for } -\infty
<\ell<\infty.
\]
For simplicity, we consider the space $\mathcal{U}$ of $2N$-periodic zero mean displacements
$\mathbf{u}=(u_{\ell})_{\ell \in \mathbb{Z}}$ from $\mathbf{y}_{F}$ given by
\[
\mathcal{U}:=\{\mathbf{u} :
u_{\ell+2N}=u_{\ell} \quad \text{for}\,\ell\in \mathbb{Z},\,
\text{and}\sum_{\ell=-N+1}^{N}u_{\ell}=0\},
\]
and we thus admit deformations $\mathbf{y}$ from the space
\[
\mathcal{Y}_{F}:=\{\mathbf{y}:
\mathbf{y}=\mathbf{y}_{F}+\mathbf{u}\text{ for some }\mathbf{u}\in
\mathcal{U}\}.
\]
We set $\eps=1/N$ throughout so that the reference length of the
periodic domain is fixed.

 We
define the discrete differentiation operator, $D\mathbf{u}$, on
periodic displacements by
\[
(D\mathbf{u})_{\ell}:=\frac{u_{\ell}-u_{\ell-1}}{\epsilon}, \quad
-\infty<\ell<\infty.
\]
We note that $\left(D\mathbf{u}\right)_{\ell}$ is also $2N$-periodic in
$\ell$ and satisfies the mean zero condition. We will denote
$\left(D\mathbf{u}\right)_{\ell}$ by $Du_{\ell}$.
 Then, we define
\begin{align*}
\left(D^{(2)}\mathbf{u}\right)_{\ell}:=\frac{Du_{\ell}-Du_{\ell-1}}{\epsilon},
\qquad -\infty<\ell<\infty.
\end{align*}
We can define $\left(D^{(3)}\mathbf{u}\right)_{\ell}$ and
$\left(D^{(4)}\mathbf{u}\right)_{\ell}$ in a similar way. To make the formulas concise,
we also denote $Du_{\ell}$ by $u'_{\ell}$, $D^{(2)}u_{\ell}$ by $u''_{\ell}$, etc.,
when there is no confusion in the expressions.

For a displacement $\mathbf{u}\in \mathcal{U}$, we define the discrete
$\ell_{\epsilon}^{2}$ norm by
\begin{align*}
\|\mathbf{u}\|_{\ell_{\epsilon}^{2}}&:= \left( \epsilon
\sum_{\ell=-N+1}^{N}|u_{\ell}|^{2}\right)^{1/2}
.
\end{align*}
The inner product associated with the
$\ell_{\epsilon}^{2}$ norm is
\[
\langle\mathbf{v},\mathbf{w}\rangle:=\epsilon \sum_{\ell=-N+1}^{N} v_{\ell}w_{\ell}
\quad \text{for all }\mathbf{v},\mathbf{w}\in \mathcal{U}.
\]

\section{The Generalized Quasi-Nonlocal Approximation.}\label{gqnl}
From \eqref{DimlessEnergy1}, we find that the difference between the
atomistic energy and the coarse-grained local QC energy is formally of second
order, i.e., $\max_{j} O(\eps H_{j})$. However, when the strain gradient is large
in some regions, namely where
$\epsilon H_{j} (Y_{j}'')^2 $ is large, this error can be unacceptable. Hence, to
maintain both accuracy and efficiency, we should use the atomistic model
where the strain gradient is large and the local QC model where the
strain gradient is moderate.  In this section, we propose a hybrid atomistic-continuum
coupling model that extends the quasi-nonlocal model~\cite{Shimokawa:2004} to include finite-range interactions.
To simplify our analysis, we
will propose and study our quasi-nonlocal model without coarsening,
although the local QC region can be coarse-grained.


According to our assumption in Section~\ref{lqc}, the total stored energy
of the atomistic model per period (that includes up to the $s$th nearest neighbor pair
interactions) in dimensionless form is
\begin{equation}\label{AtomEnerGen01}
 \mathcal{E}^{a}(\mathbf{y})=\eps \sum_{\ell=-N+1}^{N}\sum_{k=1}^{s} \phi\left( \sum_{j=0}^{k-1}y'_{\ell+j}\right).
\end{equation}
Here, the atomistic energy is a sum over the contributions from each
bond and we can also rewrite it in terms of energy contributions of each atom,
\[
\mathcal{E}^{a}(\mathbf{y})=\epsilon
\sum_{\ell=-N+1}^{N}E^{a}_{\ell}(\mathbf{y}),\quad\text{where}
\]
\[
E_{\ell}^{a}(\mathbf{y}):=\half\sum_{k=1}^{s}\phi(\sum_{j=1}^{k}y'_{\ell+j})
+\half\sum_{k=1}^{s}\phi(\sum_{j=0}^{k-1}y'_{\ell-j}).
\]

If $\mathbf{y}$ is "smooth" near $y_{\ell},$ i.e., $y'_{\ell+j}$ and $y'_{\ell-j}$ vary slowly near $y_{\ell}$, then
we can accurately approximate the distance between $k$th nearest-neighbors of $y_{\ell}$ by that of
first nearest-neighbors to approximate $E_{\ell}^{a}(\mathbf{y})$
by $E_{\ell}^{c}(\mathbf{y})$, where
\begin{align*}
E_{\ell}^{c}(\mathbf{y}):=
\half\sum_{k=1}^{s}\big[\phi(k\,y'_{\ell})+\phi(k\,y'_{\ell+1})\big]
=\half\left[\phi_{cb}(y'_{\ell})+\phi_{cb}(y'_{\ell+1})\right].
\end{align*}

If $y'_{\ell}$ is ``smooth" outside of a region $\{-K,\dots,K\}$, where $1<K<N$,
then the original quasicontinuum energy (denoted QCE)\cite{Ortiz:1995a}
 uses the atomistic energy $E_{\ell}^{a}$ in the atomistic region
$\mathcal{A}:=\{-K,\dots,K\}$ and the local QC energy $E_{\ell}^{c}$ in the continuum
region $\mathcal{C}:=\{-N+1,\dots,N\}\setminus \mathcal{A}$ to obtain
\[
\mathcal{E}^{qce}(\mathbf{y}):=
\epsilon\sum_{\ell=-N+1}^{-K+1}E^{c}_{\ell}(\mathbf{y})
+\epsilon\sum_{\ell=-K}^{K}E^{a}_{\ell}(\mathbf{y})
+\epsilon\sum_{\ell=K+1}^{N}E^{c}_{\ell}(\mathbf{y}).
\]
Although the idea of the QCE method is simple and appealing, there
are interfacial forces (called ghost forces) even for uniform strain
\cite{Miller:2003a,Dobson:2008a}.  The subsequent low order of consistency
for the QCE method has been analyzed in
\cite{Dobson:2008c,mingyang,doblusort:qce.stab}.

The first quasicontinuum energy without a ghost force, the quasi-nonlocal energy (QNL), was proposed in
~\cite{Shimokawa:2004}.  The QNL energy introduced in~\cite{Shimokawa:2004} was restricted to
next-nearest neighbor interactions which in 1D are next-nearest interactions.
By understanding this method in terms of interactions rather than the
energy contributions of ``quasi-nonlocal'' atoms, we have been able to extend the QNL
energy (for pair potentials) to arbitrarily finite-range interactions while maintaining the
uniform strain as an equilibrium (that is, there are no ghost forces).
A generalization of the QNL energy to finite-range interactions from the point view of atoms was proposed in
\cite{E:2006}, but the construction requires the solution of large systems of linear equations
that have so far not
permitted a feasible general implementation.  The interaction-based approach that we give here has been generalized
to two space dimensions in \cite{shapeev}.

In our QNL energy, the nearest-neighbor
interactions are left unchanged. A $k$th-nearest neighbor interaction
$\phi\left( \sum_{j=0}^{k-1}y'_{\ell+j}\right)$ where $k \ge 2$ is
left unchanged if at least one of the atoms $\ell+j: j=-1,\dots,k-1$
belong to the atomistic region and is replaced by a Cauchy-Born
approximation
\begin{equation}\label{cb}
 \phi\left( \sum_{j=0}^{k-1}y'_{\ell+j}\right)\approx \frac{1}{k}\sum_{j=0}^{k-1}\phi\left( k\,y'_{\ell+j}\right)
\end{equation}
if all atoms $\ell+j: j=-1,\dots,k-1$ belong to the continuum region. We define
\begin{equation}\label{ac}
\begin{split}
\mathcal{A}_{qnl}(k)&:=\{-K-k+1,-K-k+2,\dots,K,K+1\},\\
\mathcal{C}_{qnl}(k)
&:=\left\{ -N+1,\dots,-K-k\right\}\bigcup\left\{ K+2,\dots,N \right\}
\end{split}
\end{equation}
for $k=2,\dots,s.$
Then the our generalized
QNL energy is given by
\begin{align}\label{00QNLEnerGen}
\begin{split}
\mathcal{E}^{qnl}(\mathbf{y}):=&\eps \sum_{\ell=-N+1}^{N}
\phi(y'_{\ell}) +\eps\sum_{k=2}^{s}\sum_{\ell \in A_{qnl}(k)}
\phi\left(\sum_{j=0}^{k-1}y'_{\ell+j}\right)
+\eps\sum_{k=2}^{s}\sum_{\ell \in C_{qnl}(k)}
\frac{1}{k}\sum_{j=0}^{k-1}\phi\left(ky'_{\ell+j}\right).
\end{split}
\end{align}

Since the forces at the atoms for the Cauchy-Born
approximation~\eqref{cb} are unchanged for uniform strain, the QNL
energy does not have ghost forces.
\begin{proposition}
The QNL energy defined in \eqref{00QNLEnerGen} is consistent under a
uniform deformation, i.e., it does not have a ghost force.
\end{proposition}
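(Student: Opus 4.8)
The plan is to show that the uniform deformation $\mathbf{y}_F$, for which $y'_\ell=F$ for all $\ell$, is an equilibrium of $\mathcal{E}^{qnl}$ within the admissible class $\mathcal{U}$; this is precisely the assertion that there are no ghost forces, since a ghost force would be a nonzero residual force at some atom under uniform strain. Concretely, I would show that the first variation $\langle\delta\mathcal{E}^{qnl}(\mathbf{y}_F),\mathbf{u}\rangle$ vanishes for every $\mathbf{u}\in\mathcal{U}$. Because the argument of every $\phi$ appearing in \eqref{00QNLEnerGen} is evaluated at a constant multiple of $F$ at the uniform deformation, the entire computation collapses to checking how the prefactors and summation ranges combine.

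First I would differentiate the three groups of terms in \eqref{00QNLEnerGen} in the direction $\mathbf{u}$ and evaluate at $y'_\ell=F$. The nearest-neighbor term contributes $\eps\,\phi'(F)\sum_{\ell=-N+1}^{N}u'_\ell$, which already vanishes by $2N$-periodicity since $\sum_\ell u'_\ell$ telescopes to $(u_N-u_{-N})/\eps=0$. For a fixed $k\ge 2$, the atomistic part over $\mathcal{A}_{qnl}(k)$ has first variation $\eps\,\phi'(kF)\sum_{\ell\in\mathcal{A}_{qnl}(k)}\sum_{j=0}^{k-1}u'_{\ell+j}$, using $\sum_{j=0}^{k-1}y'_{\ell+j}=kF$. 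The Cauchy-Born part over $\mathcal{C}_{qnl}(k)$ has first variation $\eps\sum_{\ell\in\mathcal{C}_{qnl}(k)}\frac1k\sum_{j=0}^{k-1}\phi'(kF)\,k\,u'_{\ell+j}$, where the prefactor $\tfrac1k$ exactly cancels the chain-rule factor $k$, leaving $\eps\,\phi'(kF)\sum_{\ell\in\mathcal{C}_{qnl}(k)}\sum_{j=0}^{k-1}u'_{\ell+j}$. This cancellation is the algebraic reason that the replacement \eqref{cb} leaves atomic forces unchanged under uniform strain, so the atomistic and Cauchy-Born contributions are \emph{identical} per index $\ell$.

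Next I would combine the two $k$-neighbor contributions using the structural fact that, for each $k$, the sets $\mathcal{A}_{qnl}(k)$ and $\mathcal{C}_{qnl}(k)$ defined in \eqref{ac} are disjoint with union $\{-N+1,\dots,N\}$. Hence the two sums merge into $\eps\,\phi'(kF)\sum_{\ell=-N+1}^{N}\sum_{j=0}^{k-1}u'_{\ell+j}$. Interchanging the order of summation gives $\eps\,\phi'(kF)\sum_{j=0}^{k-1}\bigl(\sum_{\ell=-N+1}^{N}u'_{\ell+j}\bigr)$, and each inner sum runs over $2N$ consecutive indices, hence over a full period, so it equals $\sum_\ell u'_\ell=0$. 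Summing over $k=2,\dots,s$ and adding the already-vanishing nearest-neighbor term yields $\langle\delta\mathcal{E}^{qnl}(\mathbf{y}_F),\mathbf{u}\rangle=0$ for all $\mathbf{u}\in\mathcal{U}$, which establishes the claim.

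The calculus here is routine, so the only genuine content, and the step I would verify most carefully, is the claim that $\mathcal{A}_{qnl}(k)$ and $\mathcal{C}_{qnl}(k)$ tile the index range exactly, with no overlap (which would double-count a $k$-neighbor interaction) and no gap (which would drop one). This exact tiling is what the definitions in \eqref{ac} are engineered to produce, and it is the design principle distinguishing the generalized QNL energy from the QCE energy, whose atomistic and continuum regions for longer-range interactions fail to match at the interface and thereby generate ghost forces.
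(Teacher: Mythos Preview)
Your proof is correct, and the central identity you use --- that at uniform strain the atomistic bond term and its Cauchy-Born replacement have identical first variation per index $\ell$, thanks to the $\tfrac1k\cdot k$ cancellation --- is precisely the paper's observation. The organizational difference is that the paper concludes more locally: since each replaced bond contributes the same force on every atom as the unreplaced bond, the QNL forces at $\mathbf{y}_F$ coincide with the atomistic forces, which vanish; no tiling and no periodicity are invoked. You instead merge $\mathcal{A}_{qnl}(k)$ and $\mathcal{C}_{qnl}(k)$ into a full-period sum and then appeal to periodicity. Both arguments are valid, but the paper's bond-local version shows that the no-ghost-force property would hold for \emph{any} subset of bonds replaced by \eqref{cb}, so the exact tiling in \eqref{ac} is not ``the only genuine content'' here --- it is needed for your particular bookkeeping but not for the result. (Relatedly, the QCE ghost force stems from its per-atom rather than per-bond energy decomposition, not from a failure of the atomistic and continuum index sets to tile.)
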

\begin{proof}
The force at atom $\ell$ is defined to be
$-\frac{\partial\mathcal{E}^{qnl}(\mathbf{y})}{\partial y_{\ell}}$, so from
 \eqref{00QNLEnerGen}, we only need to verify that the
approximation \eqref{cb} is consistent under a uniform deformation.
We note that
\[
\frac{\partial}{\partial
y_{\ell+m}}\phi\left(\sum_{j=0}^{k-1}y'_{\ell+j}\right)\bigg|_{\mathbf{y}=\mathbf{y}_{F}}
= \frac{\partial}{\partial y_{\ell+m}}\left[\frac{1}{k}
\sum_{j=0}^{k-1}\phi\left(ky'_{\ell+j}\right)\right]\bigg|_{\mathbf{y}=\mathbf{y}_{F}},
\qquad\forall m=-1,\dots,k.
\]
Therefore, the QNL energy is consistent. 
\end{proof}

 We note that the negative of the QNL forces for a general deformation $\mathbf{y}^{qnl}\in
\mathcal{Y}_{F}$ is given by
\begin{align}\label{QNLSolEq001}
\begin{split}
\la\delta\mathcal{E}^{qnl}(\mathbf{y}^{qnl}),\mathbf{w}\ra
&= \eps \sum_{\ell=-N+1}^{N} \phi'(y'_{\ell})w'_{\ell}
+\eps\sum_{k=2}^{s}\,\sum_{\ell \in A_{qnl}(k)} \phi'\left( \sum_{t=0}^{k-1}y'_{\ell+j}\right)
\left( \sum_{j=0}^{k-1}w'_{\ell+j}\right)\\
&\qquad +\eps \sum_{k=2}^{s}\,\sum_{\ell \in
C_{qnl}(k)}\sum_{j=0}^{k-1}\phi'\left( k\,y'_{\ell+j}\right)
w'_{\ell+j}\quad \forall \mathbf{w}\in \mathcal{U}.
\end{split}
\end{align}

\section{Sharp Stability Analyses of Atomistic and QNL model.}\label{sharp}
A sharp stability analysis for both the atomistic and QNL models are
needed to determine whether the QNL approximation is accurate
for strains near the limits of lattice stability.
 In this section, we will thus give
stability analyses for both models. In order to provide clear
statements and proofs, we first apply a similar method given in
\cite{doblusort:qce.stab} to a third nearest-neighbor interaction
range problem and then generalize the results to the finite range
case.

\subsection{Atomistic Model with Third Nearest-Neighbor
Interaction Range}The total energy for the atomistic
model given by a third nearest-neighbor interaction without external
forces is
\begin{equation}\label{AtomEner3rd}
\mathcal{E}^{a}(\mathbf{y})= \eps
\sum_{\ell=-N+1}^{N}\phi(y'_{\ell})+
\phi(y'_{\ell}+y'_{\ell+1})+\phi(y'_{\ell}+y'_{\ell+1}+y'_{\ell+2}).
\end{equation}
It is easy to see that the uniform deformation $\mathbf{y}_{F}$ is an {\it equilibrium} of
the atomistic model, that is,
\[
\la\delta\mathcal{E}^{a}(\mathbf{y}_F),\mathbf{w}\ra
=0\quad \forall \mathbf{w}\in \mathcal{U}.
\]
We will say that the equilibrium
$\mathbf{y}_{F}$ is {\em stable} for the atomistic model if
$\delta^{2}\mathcal{E}^{a}(\mathbf{y}_{F})$ is positive definite, that is,
if
\begin{align}\label{sharpStableAto1}
\la\delta^{2}\mathcal{E}^{a}(\mathbf{y}_{F})\mathbf{u},\mathbf{u}\ra&=\eps\sum_{\ell=-N+1}^{N}\big(\phi''_{F}|u'_{\ell}|^2
+\phi''_{2F}|u'_{\ell}+u'_{\ell+1}|^2  \big.\\
& \qquad\quad\big. +
\phi''_{3F}|u'_{\ell}+u'_{\ell+1}+u'_{\ell+2}|^2\big) > 0 \quad
\forall \mathbf{u}\in \mathcal{U}\setminus \{\mathbf{0}\}\nonumber
\end{align}
where
\begin{displaymath}
  \phi_F'' := \phi''(F),\ \phi_{2F}'' := \phi''(2F),\dots,
  \phi_{sF}'' := \phi''(sF).
\end{displaymath}

We observe that
\begin{align}\label{Equa1}
\begin{split}
|u'_{\ell}+u'_{\ell+1}|^2&=2|u'_{\ell}|^2+2|u'_{\ell+1}|^2-|u'_{\ell+1}-u'_{\ell}|^2,\\
|u'_{\ell}+u'_{\ell+1}+u'_{\ell+2}|^2&=3|u'_{\ell}|^2+3|u'_{\ell+1}|^2+3|u'_{\ell+2}|^2\\
&\quad -3\eps^2|u''_{\ell+1}|^2 - 3\eps^2|u''_{\ell+2}|^2 +\eps^2
|u''_{\ell+2}-u''_{\ell+1}|^2.
\end{split}
\end{align}
Because of the periodicity of $u'_{\ell},\,u''_{\ell}$ in $\ell$,
 we note that the sum of $u'_{\ell+1}$ from $-N+1$ to $N$ is equal to the sum of $u'_{\ell}$ and the sum of
$u''_{\ell+1} $ is equal to the sum of $u''_{\ell}$, etc.
So we can rewrite \eqref{sharpStableAto1} and get the lower bound
\begin{align}\label{sharpStableAto2}
\begin{split}
\la\delta^2\mathcal{E}^{a}(\mathbf{y}_{F})\mathbf{u},\mathbf{u}\ra&=
\eps\sum_{\ell=-N+1}^{N}\big(\phi''_{F}+4\phi''_{2F}+9\phi''_{3F}\big)|u'_{\ell}|^2
-\eps\sum_{\ell=-N+1}^{N}\big(\eps^2\phi''_{2F}+6\eps^2\phi''_{3F}\big)|u''_{\ell}|^2\\
&\qquad
+\eps\sum_{\ell=-N+1}^{N}\big(\eps^2\phi''_{3F}\big)|u''_{\ell+1}-u''_{\ell}|^2\\
&=
\big(\phi''_{F}+4\phi''_{2F}+9\phi''_{3F}\big)\|\mathbf{u}'\|_{\ell_{\eps}^{2}}^2
-\big(\eps^2\phi''_{2F}+2\eps^2\phi''_{3F}\big)\|\mathbf{u}''\|_{\ell_{\eps}^{2}}^2\\
&\qquad-\eps\big(\eps^2\phi''_{3F}\big)\sum_{\ell=-N+1}^{N}\left[4|u''_{\ell}|^2-|u''_{\ell+1}-u''_{\ell}|^2\right]\\
&\ge
\big(\phi''_{F}+4\phi''_{2F}+9\phi''_{3F}\big)\|\mathbf{u}'\|_{\ell_{\eps}^{2}}^2
-\eps^2\big(\phi''_{2F}+2\phi''_{3F}\big)\|\mathbf{u}''\|_{\ell_{\eps}^{2}}^2
\end{split}
\end{align}
and since
\begin{equation}\label{4}
\sum_{\ell=-N+1}^{N}4|u''_{\ell}|^2\ge
\sum_{\ell=-N+1}^{N}\left[4|u''_{\ell}|^2-|u''_{\ell+1}-u''_{\ell}|^2\right]=
\sum_{\ell=-N+1}^{N}\left[2|u''_{\ell}|^2+2u''_{\ell+1}u''_{\ell}\right]\ge 0,
\end{equation}
the upper bound of $\la\delta^2\mathcal{E}^{a}(\mathbf{y}_{F})\mathbf{u},\mathbf{u}\ra$ is
\begin{equation}\label{up}
\la\delta^2\mathcal{E}^{a}(\mathbf{y}_{F})\mathbf{u},\mathbf{u}\ra\le
\big(\phi''_{F}+4\phi''_{2F}+9\phi''_{3F}\big)\|\mathbf{u}'\|_{\ell_{\eps}^{2}}^2
-\eps^2\big(\phi''_{2F}+6\phi''_{3F}\big)\|\mathbf{u}''\|_{\ell_{\eps}^{2}}^2.
\end{equation}

We thus define
\begin{align}\label{AfandMu1}
\begin{split}
&A_{F}^{(3)}:=\phi''_{F}+4\phi''_{2F}+9\phi''_{3F},\quad
\mu_{\eps}:=\inf_{\Psi \in \mathcal{U} \setminus \{\mathbf{0}\} }
\frac{\|\Psi''\|_{\ell_{\eps}^2}}{\|\Psi'\|_{\ell_{\eps}^{2}}}
=\frac{2\sin(\pi \eps/2)}{\eps},
\end{split}
\end{align}
where the equality $\mu_{\eps} = \frac{2\sin(\pi \eps/2)}{\eps}$
is given in \cite{doblusort:qce.stab}. We then obtain the following
stability result for the atomistic model.
\begin{theorem}\label{sharpStableAtom3rd}
Suppose $\phi''_{2F} \le 0$, $\phi''_{3F}\le 0$ and $\phi''_{kF}=0$ for $k\ge4$. Then
$\mathbf{y}_{F}$ is a stable equilibrium of the atomistic model if and only if
$A_{F}^{(3)}-\eps^2\mu_{\eps}^2\big(\phi''_{2F}+\eta_{F}\phi''_{3F}\big)
 > 0,$ where $\eta_{F}$ is a constant $2\le \eta_F\le 6$ that is uniquely determined by the deformation gradient $F,$ and
$A_{F}^{(3)}$ and $\mu_{\eps}$ are defined in
 \eqref{AfandMu1}.
 \end{theorem}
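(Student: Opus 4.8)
The plan is to recast the stability condition \eqref{sharpStableAto1} as the positivity of the infimum of a Rayleigh quotient, and then to trap that infimum between the two explicit bounds already recorded in \eqref{sharpStableAto2} and \eqref{up}. First I would note that $\|\mathbf{u}'\|_{\ell_{\eps}^{2}}=0$ forces $\mathbf{u}$ to be constant, whereupon the zero-mean condition defining $\mathcal{U}$ forces $\mathbf{u}=\mathbf{0}$; hence $\|\mathbf{u}'\|_{\ell_{\eps}^{2}}$ is a genuine norm on the finite-dimensional space $\mathcal{U}$. Consequently $\mathbf{y}_{F}$ is stable if and only if
\[
\Lambda:=\inf_{\mathbf{u}\in\mathcal{U}\setminus\{\mathbf{0}\}}
\frac{\la\delta^{2}\mathcal{E}^{a}(\mathbf{y}_{F})\mathbf{u},\mathbf{u}\ra}{\|\mathbf{u}'\|_{\ell_{\eps}^{2}}^{2}}>0 .
\]
Because $\mathcal{U}$ is finite-dimensional and the quotient is continuous on the compact sphere $\{\|\mathbf{u}'\|_{\ell_{\eps}^{2}}=1\}$, the infimum $\Lambda$ is attained; this is exactly what will let me upgrade the non-strict bounds below into the strict equivalence the theorem asserts.

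Next I would invoke the exact identity in \eqref{sharpStableAto2}, which expresses $\la\delta^{2}\mathcal{E}^{a}(\mathbf{y}_{F})\mathbf{u},\mathbf{u}\ra$ as $A_{F}^{(3)}\|\mathbf{u}'\|_{\ell_{\eps}^{2}}^{2}$ minus $\eps^{2}(\phi''_{2F}+2\phi''_{3F})\|\mathbf{u}''\|_{\ell_{\eps}^{2}}^{2}$ plus a remainder proportional to $\sum_{\ell}[4|u''_{\ell}|^{2}-|u''_{\ell+1}-u''_{\ell}|^{2}]$. The two-sided estimate \eqref{4} bounds this remainder sum between $0$ and $4\sum_{\ell}|u''_{\ell}|^{2}=\frac{4}{\eps}\|\mathbf{u}''\|_{\ell_{\eps}^{2}}^{2}$; since $\phi''_{3F}\le0$, the remainder enters with a nonnegative coefficient, so its minimum value reproduces the lower bound in \eqref{sharpStableAto2} (with coefficient $\phi''_{2F}+2\phi''_{3F}$) and its maximum value reproduces the upper bound \eqref{up} (with coefficient $\phi''_{2F}+6\phi''_{3F}$). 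The hypothesis $\phi''_{kF}=0$ for $k\ge4$ guarantees that no further interaction terms enter.

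The key step is then to feed the definition of $\mu_{\eps}$ from \eqref{AfandMu1} into both bounds, tracking signs carefully. As $\phi''_{2F},\phi''_{3F}\le0$, the coefficient $-\eps^{2}(\phi''_{2F}+2\phi''_{3F})$ of $\|\mathbf{u}''\|_{\ell_{\eps}^{2}}^{2}$ in the lower bound is nonnegative, so dividing by $\|\mathbf{u}'\|_{\ell_{\eps}^{2}}^{2}$ and using $\|\mathbf{u}''\|_{\ell_{\eps}^{2}}^{2}/\|\mathbf{u}'\|_{\ell_{\eps}^{2}}^{2}\ge\mu_{\eps}^{2}$ gives $\Lambda\ge A_{F}^{(3)}-\eps^{2}\mu_{\eps}^{2}(\phi''_{2F}+2\phi''_{3F})$. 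For the matching upper bound I would evaluate the quotient at the minimizer $\Psi_{\ast}$ realizing $\mu_{\eps}$ in \eqref{AfandMu1}, for which $\|\Psi_{\ast}''\|_{\ell_{\eps}^{2}}^{2}/\|\Psi_{\ast}'\|_{\ell_{\eps}^{2}}^{2}=\mu_{\eps}^{2}$ exactly; inserting $\Psi_{\ast}$ into \eqref{up} yields $\Lambda\le A_{F}^{(3)}-\eps^{2}\mu_{\eps}^{2}(\phi''_{2F}+6\phi''_{3F})$. Since $\phi''_{3F}\le0$, the second value is at least the first, so $\Lambda$ lies in the closed interval they bound. Defining $\eta_{F}$ as the unique number with $\Lambda=A_{F}^{(3)}-\eps^{2}\mu_{\eps}^{2}(\phi''_{2F}+\eta_{F}\phi''_{3F})$ (its value being immaterial when $\phi''_{3F}=0$), the sandwich forces $2\le\eta_{F}\le6$, with $\eta_{F}$ determined by $F$ through $\phi''_{2F},\phi''_{3F}$ and the attained value $\Lambda$; stability $\Lambda>0$ is then equivalent to the stated inequality.

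The main obstacle is the sign bookkeeping: because $\phi''_{3F}\le0$, dividing the defining relation for $\eta_{F}$ by $\eps^{2}\mu_{\eps}^{2}\phi''_{3F}$ reverses inequalities, so one must verify that the $\eta=2$ endpoint is genuinely the lower bound for $\Lambda$ and the $\eta=6$ endpoint the upper one, and hence that $\eta_{F}\in[2,6]$ and not the reverse ordering. A secondary point needing care is attainment of both infima: the outer one $\Lambda$, which is what lets the two non-strict bounds sharpen into the claimed if-and-only-if, and the inner one $\mu_{\eps}$, attained at the explicit lowest Fourier mode as recorded in \eqref{AfandMu1}, which is what makes the upper estimate via $\Psi_{\ast}$ legitimate; both follow from finite-dimensionality.
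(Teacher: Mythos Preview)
Your proposal is correct and follows essentially the same approach as the paper: the paper does not give a separate proof but simply derives the lower and upper bounds \eqref{sharpStableAto2} and \eqref{up} immediately before stating the theorem, leaving the Rayleigh-quotient and sandwich argument implicit. You have spelled out precisely those implicit steps---attainment of the infimum on the finite-dimensional sphere, evaluation at the lowest-mode minimizer of $\mu_\eps$, and the sign-aware definition of $\eta_F$---so your argument is a faithful completion of the paper's own.
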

 \begin{remark}\label{AssumpPhiSecDerivative}
 The hypotheses $\phi''_{2F}\le 0,\,\phi''_{3F}\le 0$ in Theorem~\ref{sharpStableAtom3rd}
 are reasonable according to the assumptions of the interaction potential $\phi$ in the
 Section \ref{lqc}. We note that $y'_{\ell}\le \frac{r^{*}}{2}$ only occurs under extreme compression,
 and in that case the finite range pair interaction model \eqref{AtomEner3rd} itself can be expected to be invalid.
 We refer to Section~2.3 of~\cite{ortner:qnl1d} for further
discussion of this point.
 \end{remark}
\space
\subsection{The QNL Model with Third Nearest-Neighbor
Interaction Range} The total energy for the QNL model given by a
third nearest-neighbor interaction model without external forces is
\begin{align}\label{QNLenergy3rd}
\begin{split}
\mathcal{E}^{qnl}(\mathbf{y})&=\eps \sum_{\ell=-N+1}^{N}
\phi(y'_{\ell})+\eps \sum_{\ell \in A_{qnl}(2)}\phi \left(
y'_{\ell}+y'_{\ell+1}\right) + \eps \sum_{\ell \in C_{qnl}(2)} \half
\big\{\phi(2y'_{\ell})+\phi(2y'_{\ell+1})\big\}\\
&\quad + \eps \sum_{\ell \in A_{qnl}(3)}\phi \left(
y'_{\ell}+y'_{\ell+1}+y'_{\ell+2}\right) + \eps \sum_{\ell \in
C_{qnl}(3)} \frac{1}{3}
\big\{\phi(3y'_{\ell})+\phi(3y'_{\ell+1})+\phi(3y'_{\ell+2})\big\}.
\end{split}
\end{align}
Since the QNL model does not have ghost forces, the uniform
deformation $\mathbf{y}_{F}$ is still an equilibrium for
\eqref{QNLenergy3rd}. We thus focus on
$\mathcal{E}''_{qnl}(\mathbf{y}_{F})[\mathbf{u},\mathbf{u}]$ to
analyze the stability of the QNL model. We note that
$\mathcal{E}''_{qnl}(\mathbf{y}_{F})[\mathbf{u},\mathbf{u}]$ can be
written as
\begin{align}\label{SharpStabQNL1}
\begin{split}
\la\delta^{2}\mathcal{E}^{qnl}(\mathbf{y}_{F})\mathbf{u},\mathbf{u}\ra&= \eps
\sum_{\ell=-N+1}^{N} \phi''_{F}|u'_{\ell}|^2 \\
&\quad+ \eps\sum_{\ell \in A_{qnl}(2)}
\phi''_{2F}|u'_{\ell}+u'_{\ell+1}|^2 + \eps \sum_{\ell \in
C_{qnl}(2)} 4\phi''_{2F}\left\{ \half
|u'_{\ell}|^2+\half|u'_{\ell+1}|^2\right\}\\
&\qquad + \eps\sum_{\ell \in A_{qnl}(3)}
\phi''_{3F}|u'_{\ell}+u'_{\ell+1}+u'_{\ell+2}|^2 \\
&\quad+ \eps \sum_{\ell \in C_{qnl}(3)} 9\phi''_{3F}\left\{
\frac{1}{3}
|u'_{\ell}|^2+\frac{1}{3}|u'_{\ell+1}|^2+\frac{1}{3}|u'_{\ell+2}|^2\right\}.
\end{split}
\end{align}
Applying \eqref{Equa1} to \eqref{SharpStabQNL1}, we obtain
\begin{align}\label{SharpStabQNL2}
\begin{split}
\la\delta^2\mathcal{E}^{qnl}(\mathbf{y}_{F})\mathbf{u},\mathbf{u}\ra &=
A_{F}^{(3)} \|\mathbf{u}'\|_{\ell_{\eps}^2}^2-\eps^3\sum_{\ell \in
A_{qnl}(2)}\phi''_{2F}|u''_{\ell+1}|^2 \\
&\qquad-\eps^3\sum_{\ell\in A_{qnl}(3)} \phi''_{3F}
\left\{|u''_{\ell+1}|^2+
|u''_{\ell+2}|^2+|u''_{\ell+2}+u''_{\ell+1}|^2\right\}\\
&\ge A_{F}^{(3)} \|\mathbf{u}'\|_{\ell_{\eps}^2}^2.
\end{split}
\end{align}
Since the lower bound in \eqref{SharpStabQNL2}
is achieved by any displacement supported in the local region
(which exists unless $K \in \{N-2,N-1,N\}$),
it follows  that $\mathbf{y}_{F}$
is stable in the generalized QNL model if and only if $A^{(3)}_{F}
>0.$

\begin{theorem}\label{SharpStabQNL3rdTh}
Suppose $\phi''_{2F} \le 0$, $\phi''_{3F}\le 0$ and $\phi''_{kF}=0$ for $k\ge4$.
 Then $\mathbf{y}_{F}$ is a stable equilibrium of the QNL model if and only if
$A_{F}^{(3)}>0$.
\end{theorem}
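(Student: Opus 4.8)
The essential computation is already carried out in \eqref{SharpStabQNL2}, which I would take as the starting point. It expresses the second variation as
\[
\la\delta^2\mathcal{E}^{qnl}(\mathbf{y}_{F})\mathbf{u},\mathbf{u}\ra = A_{F}^{(3)}\,\|\mathbf{u}'\|_{\ell_{\eps}^2}^2 + R(\mathbf{u}),
\]
where $R(\mathbf{u})$ gathers the interfacial correction terms in $u''$ summed over $\ell\in A_{qnl}(2)\cup A_{qnl}(3)$. Under the hypotheses $\phi''_{2F}\le 0$, $\phi''_{3F}\le 0$ and $\phi''_{kF}=0$ for $k\ge4$, each summand of $R(\mathbf{u})$ is a nonnegative constant times a square, so $R(\mathbf{u})\ge 0$ and the lower bound $\la\delta^2\mathcal{E}^{qnl}(\mathbf{y}_{F})\mathbf{u},\mathbf{u}\ra\ge A_F^{(3)}\|\mathbf{u}'\|_{\ell_{\eps}^2}^2$ of \eqref{SharpStabQNL2} holds. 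The plan is to read off both directions of the stated equivalence from this single bound.

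For sufficiency I would suppose $A_F^{(3)}>0$ and take any $\mathbf{u}\in\mathcal{U}\setminus\{\mathbf{0}\}$. First I would note that $\|\mathbf{u}'\|_{\ell_{\eps}^2}>0$, since $\mathbf{u}'=\mathbf{0}$ would make $\mathbf{u}$ constant, and the zero-mean constraint defining $\mathcal{U}$ then forces $\mathbf{u}=\mathbf{0}$. The lower bound then gives $\la\delta^2\mathcal{E}^{qnl}(\mathbf{y}_{F})\mathbf{u},\mathbf{u}\ra\ge A_F^{(3)}\|\mathbf{u}'\|_{\ell_{\eps}^2}^2>0$, so $\delta^2\mathcal{E}^{qnl}(\mathbf{y}_{F})$ is positive definite and $\mathbf{y}_F$ is stable.

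For necessity I would show the lower bound is sharp by constructing a test function that annihilates $R$. The key structural point is that $R(\mathbf{u})$ depends on $u''$ only at the atomistic indices entering $A_{qnl}(2)$ and $A_{qnl}(3)$, namely $\ell\in\{-K-1,\dots,K+3\}$. I would therefore pick $\mathbf{u}^*\in\mathcal{U}\setminus\{\mathbf{0}\}$ whose gradient $(\mathbf{u}^*)'$ is supported deep in the local region and in particular is constant on $\{-K-2,\dots,K+3\}$, so that $(\mathbf{u}^*)''$ vanishes on the whole index range feeding $R$. Since each summand of $R$ is a nonnegative multiple of a square, this yields $R(\mathbf{u}^*)=0$, whence $\la\delta^2\mathcal{E}^{qnl}(\mathbf{y}_{F})\mathbf{u}^*,\mathbf{u}^*\ra=A_F^{(3)}\|(\mathbf{u}^*)'\|_{\ell_{\eps}^2}^2$; stability forces this to be positive, and since $\|(\mathbf{u}^*)'\|_{\ell_{\eps}^2}>0$ we obtain $A_F^{(3)}>0$. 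The one genuine obstacle is the existence of such a $\mathbf{u}^*$: it needs enough continuum sites outside the window $\{-K-2,\dots,K+3\}$ to support a nonzero periodic mean-zero displacement, which fails only when the atomistic region nearly fills the period, i.e. when $K\in\{N-2,N-1,N\}$. That degenerate range must be excluded (there the QNL energy reduces toward the fully atomistic one, and the relevant criterion becomes that of Theorem~\ref{sharpStableAtom3rd}); away from it the buffer of local sites guarantees $\mathbf{u}^*$ exists and the equivalence follows.
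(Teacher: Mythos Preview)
Your proposal is correct and follows essentially the same route as the paper: the paper also derives the lower bound \eqref{SharpStabQNL2}, then observes it is attained by any displacement supported in the local (continuum) region, noting the same degenerate exception $K\in\{N-2,N-1,N\}$. Your write-up simply spells out the two implications and the index bookkeeping for $R(\mathbf{u})$ in more detail than the paper does.
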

We have given above a stability analyses of the atomistic model
and the QNL model with a third-nearest neighbor interaction range.
We now study the general case.
\subsection{The Atomistic Model with $s$th Nearest-Neighbor
Interaction Range} In the case of $s$th nearest neighbor
interactions, the total energy for the atomistic model without
external forces is
\begin{equation}\label{AtomEnerGen}
 \mathcal{E}^{a}(\mathbf{y})=\eps \sum_{\ell=-N+1}^{N}\sum_{k=1}^{s} \phi\left( \sum_{j=0}^{k-1}y'_{\ell+j}\right).
\end{equation}
The uniform deformation $\mathbf{y}_{F}$ is still its equilibrium, so the stability condition is
\begin{align}\label{AtomSharpStabGen}
\la\delta^2 \mathcal{E}^{a}(\mathbf{y}_{F})\mathbf{u},\mathbf{u}\ra=\eps
\sum_{\ell=-N+1}^{N}\sum_{k=1}^{s} \phi''_{kF}\left( \sum_{j=0}^{k-1}u'_{\ell+j}\right)^2 > 0
\qquad
\forall \mathbf{u}\in \mathcal{U}\setminus \{\mathbf{0}\}.
\end{align}

We assume that $\phi''_{F} > 0$ and
$\phi''_{kF} \le 0$ for $k=2,\dots,s.$ We first consider
$\phi''_{kF} \left(\sum_{j=0}^{k-1}u'_{\ell+j}\right)^2$ for $k\ge
2$:
\begin{align}\label{AtomShaGenEq1}
\begin{split}
 \phi''_{kF} \left(\sum_{j=0}^{k-1}u'_{\ell+j}\right)^2 &=
\phi''_{kF}\sum_{j=0}^{k-1}\left(u'_{\ell+j}\right)^2+ \phi''_{kF}\sum_{j=0}^{k-2}\sum_{i=j+1}^{k-1}2 u'_{\ell+j}\,u'_{\ell+i}\\
&=\phi''_{kF}\sum_{j=0}^{k-1}\left(u'_{\ell+j}\right)^2
 + \phi''_{kF}\sum_{j=0}^{k-2}\sum_{i=j+1}^{k-1}\left\{ \big(u'_{\ell+j}\big)^2+\big(u'_{\ell+i}\big)^2-\left(u'_{\ell+i}-u'_{\ell+j}\right)^2 \right\}.
\end{split}
\end{align}
Recalling that $u''_{\ell}:=\frac{u'_{\ell}-u'_{\ell-1}}{\eps}$,
we can further simplify \eqref{AtomShaGenEq1} and get
\begin{align}\label{AtomShaGenEq2}
\begin{split}
 \phi''_{kF} \left(\sum_{j=0}^{k-1}u'_{\ell+j}\right)^2
&=\phi''_{kF}\sum_{j=0}^{k-1}\left(u'_{\ell+j}\right)^2
 + \phi''_{kF}\sum_{j=0}^{k-2}\sum_{i=j+1}^{k-1}\left\{ \big(u'_{\ell+j}\big)^2+\big(u'_{\ell+i}\big)^2\right\}\\
&\quad -\phi''_{kF}\eps^2\sum_{j=0}^{k-2}\sum_{i=j+1}^{k-1}\left( \sum_{t=j+1}^{i}u''_{\ell+t}\right)^2 \\
& \ge \phi''_{kF}\sum_{j=0}^{k-1}\left(u'_{\ell+j}\right)^2
 + \phi''_{kF}\sum_{j=0}^{k-2}\sum_{i=j+1}^{k-1}\left\{ \big(u'_{\ell+j}\big)^2+\big(u'_{\ell+i}\big)^2\right\}\\
&\quad -\phi''_{kF}\eps^2\sum_{j=0}^{k-2}\left( u''_{\ell+j+1}\right)^2.
\end{split}
\end{align}
The last inequality comes from  the fact that
$-\phi''_{kF}\eps^2 \ge 0$ for $k=2,\dots, s$, so we determine that the
terms $i=j+2,\dots, k-1$ in the third expression above are all
nonnegative. Therefore, \eqref{AtomSharpStabGen} becomes
\begin{align}\label{AtomShaGenEq3}
\begin{split}
\la\delta^2  \mathcal{E}^{a}(\mathbf{y}_{F})\mathbf{u},\mathbf{u}\ra
&\ge \eps \sum_{\ell=-N+1}^{N}\phi''_{F} \big(u'_{\ell}\big)^2\\
&\quad   + \sum_{k=2}^{s}\eps\sum_{\ell=-N+1}^{N}\phi''_{kF}
 \left\{   \sum_{j=0}^{k-1}\big(u'_{\ell+j}\big)^2+ \sum_{j=0}^{k-2}\sum_{i=j+1}^{k-1}\left[ \big(u'_{\ell+j}\big)^2+\big(u'_{\ell+i}\big)^2\right]
-\eps^2 \sum_{j=0}^{k-2}\left( u''_{\ell+j+1}\right)^2   \right\} \\
& = \sum_{k=1}^{s} k^2 \phi''_{kF}\|u'\|_{\ell_{\eps}^{2}}^{2}
-\sum_{k=2}^{s}\eps^2\phi''_{kF}(k-1) \|u''\|_{\ell_{\eps}^{2}}^{2}.
\end{split}
\end{align}
On the other hand,
we can further study the third term in the fist line of
\eqref{AtomShaGenEq2} and rewrite it as:
\begin{align*}
 -\phi''_{kF}\eps^2\sum_{j=0}^{k-2}\sum_{i=j+1}^{k-1}\left( \sum_{t=j+1}^{i}u''_{\ell+t}\right)^2
&= -\phi''_{kF}\eps^2\sum_{j=0}^{k-2}\sum_{i=j+1}^{k-1}\left[
\sum_{t=j+1}^{i}{u''_{\ell+t}}^2+\sum_{t=j+1}^{i-1}\sum_{s=t+1}^{i}\left({u''_{\ell+t}}^2+{u''_{\ell+s}}^2\right)\right]\\
&\qquad\qquad
+\phi''_{kF}\epsilon^2\sum_{j=0}^{k-2}\sum_{i=j+1}^{k-1}\sum_{t=j+1}^{i-1}\sum_{s=t+1}^{i}\left(u''_{\ell+t}-u''_{\ell+s}\right)^2.
\end{align*}
Thus, we can use the first line of \eqref{AtomShaGenEq2} and $\phi''_{kF}\le 0$, $k=2,\dots,s$ to
 obtain an upper bound of $\phi''_{kF}\left(\sum_{j=0}^{k-1}u'_{\ell+j}\right)^2$:
\begin{align}\label{AtomShaGenEq4}
\begin{split}
\phi''_{kF} \left(\sum_{j=0}^{k-1}u'_{\ell+j}\right)^2&\le \phi''_{kF}\sum_{j=0}^{k-1}\left(u'_{\ell+j}\right)^2
 + \phi''_{kF}\sum_{j=0}^{k-2}\sum_{i=j+1}^{k-1}\left\{ \big(u'_{\ell+j}\big)^2+\big(u'_{\ell+i}\big)^2\right\}\\
&\qquad -\phi''_{kF}\eps^2\sum_{j=0}^{k-2}\sum_{i=j+1}^{k-1}\left[
\sum_{t=j+1}^{i}{u''_{\ell+t}}^2+\sum_{t=j+1}^{i-1}\sum_{s=t+1}^{i}\left({u''_{\ell+t}}^2+{u''_{\ell+s}}^2\right)\right].
\end{split}
\end{align}

Observing the last term of \eqref{AtomShaGenEq4} and because of the
periodicity condition of $u''_{\ell}$, for each fixed $k$, we have
that $ \sum_{j=0}^{k-2}\sum_{i=j+1}^{k-1}\left[
\sum_{t=j+1}^{i}{u''_{\ell+t}}^2+\sum_{t=j+1}^{i-1}\sum_{s=t+1}^{i}\left({u''_{\ell+t}}^2+{u''_{\ell+s}}^2\right)\right]$
is equivalent to $\sum_{j=0}^{k-2}\sum_{i=j+1}^{k-1} (i-j)^2
|u''_{\ell}|^2=\frac{k^4-k^2}{12}|u''_{\ell}|^2$.
Thus, we can obtain the following upper bound of
$\mathcal{E}''_{a}(\mathbf{y}_{F})[\mathbf{u},\mathbf{u}]$:
\begin{align}\label{AtomShaGenEq5}
\begin{split}
\la\delta^2\mathcal{E}^{a}(\mathbf{y}_{F})\mathbf{u},\mathbf{u}\ra &\le
\sum_{k=1}^{s} k^2
\phi''_{kF}\|u'\|_{\ell_{\eps}^{2}}^{2}-\sum_{k=2}^{s}\eps^2\phi''_{kF}\sum_{j=0}^{k-1}\sum_{i=j+1}^{k-1}(i-j)^2
\sum_{\ell=-N+1}^{N}\epsilon|u''_{\ell}|^{2}\\
&= \sum_{k=1}^{s} k^2 \phi''_{kF}\|u'\|_{\ell_{\eps}^{2}}^{2}
-\sum_{k=2}^{s}\eps^2\phi''_{kF}\frac{k^4-k^2}{12}
\|u''\|_{\ell_{\eps}^{2}}^{2}.
\end{split}
\end{align}
Note that when $k=2$, we have $\frac{k^4-k^2}{12}=1=k-1$,
which means that the upper bound equals the lower bound if $s=2.$

 We next define
\begin{equation}\label{AfGen}
A^{s}_{F}:=\sum_{k=1}^{s} k^2\phi''_{kF},
\end{equation}
and use the same notation for $\mu_{\eps}$ defined in \eqref{AfandMu1}.
We then obtain the following sharp stability estimate:
\begin{theorem}\label{sharpAtomGenStabThm}
Suppose $\phi''_{kF} \le 0$ for $k=2,\dots,s$. There exists a
constant $B=B_{F}$ satisfying
\begin{equation*}
\sum_{k=2}^{s}(k-1)\phi''_{kF}\ge B_{F}\ge
\phi''_{2F}+\sum_{k=3}^{s}\frac{k^4-k^2}{12}\phi''_{kF},
\end{equation*}
 such that
$\mathbf{y}_{F}$ is a stable equilibrium of the atomistic model if
and only if $A^{s}_{F}-\eps^2\mu_{\eps}^2B_{F}>0$, where $A^{s}_{F}$
is defined in \eqref{AfGen} and $\mu_{\eps}$ is defined in
 \eqref{AfandMu1}.
 \end{theorem}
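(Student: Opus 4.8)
The plan is to collapse the positive-definiteness condition \eqref{AtomSharpStabGen} to a single scalar, the generalized Rayleigh quotient
\[
\lambda:=\inf_{\mathbf{u}\in\mathcal{U}\setminus\{\mathbf{0}\}}\frac{\la\delta^2\mathcal{E}^{a}(\mathbf{y}_{F})\mathbf{u},\mathbf{u}\ra}{\|\mathbf{u}'\|_{\ell_{\eps}^{2}}^{2}},
\]
which is well defined because $\|\mathbf{u}'\|_{\ell_{\eps}^{2}}>0$ for every $\mathbf{u}\in\mathcal{U}\setminus\{\mathbf{0}\}$ (a mean-zero periodic displacement with vanishing gradient is identically zero). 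By definition $\mathbf{y}_{F}$ is a stable equilibrium if and only if $\lambda>0$. Since $\eps^{2}\mu_{\eps}^{2}=4\sin^{2}(\pi\eps/2)>0$, I would then simply \emph{define}
\[
B_{F}:=\frac{A^{s}_{F}-\lambda}{\eps^{2}\mu_{\eps}^{2}},
\]
so that $\lambda=A^{s}_{F}-\eps^{2}\mu_{\eps}^{2}B_{F}$ and the equivalence ``stable $\iff A^{s}_{F}-\eps^{2}\mu_{\eps}^{2}B_{F}>0$'' holds tautologically. All the content then lies in pinning $B_{F}$ to the asserted interval, which is exactly what the two one-sided energy estimates \eqref{AtomShaGenEq3} and \eqref{AtomShaGenEq5} deliver.

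For the upper bound $B_{F}\le\sum_{k=2}^{s}(k-1)\phi''_{kF}$, I would use the lower estimate \eqref{AtomShaGenEq3}, namely $\la\delta^2\mathcal{E}^{a}(\mathbf{y}_{F})\mathbf{u},\mathbf{u}\ra\ge A^{s}_{F}\|\mathbf{u}'\|_{\ell_{\eps}^{2}}^{2}-\eps^{2}\big(\sum_{k=2}^{s}(k-1)\phi''_{kF}\big)\|\mathbf{u}''\|_{\ell_{\eps}^{2}}^{2}$. Because the hypothesis $\phi''_{kF}\le0$ makes the coefficient $\sum_{k=2}^{s}(k-1)\phi''_{kF}$ nonpositive, and because $\|\mathbf{u}''\|_{\ell_{\eps}^{2}}\ge\mu_{\eps}\|\mathbf{u}'\|_{\ell_{\eps}^{2}}$ for every $\mathbf{u}$ by the very definition of $\mu_{\eps}$ in \eqref{AfandMu1}, dividing by $\|\mathbf{u}'\|_{\ell_{\eps}^{2}}^{2}$ and taking the infimum over $\mathbf{u}$ gives $\lambda\ge A^{s}_{F}-\eps^{2}\mu_{\eps}^{2}\sum_{k=2}^{s}(k-1)\phi''_{kF}$. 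Rearranging yields $B_{F}\le\sum_{k=2}^{s}(k-1)\phi''_{kF}$.

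For the lower bound $B_{F}\ge\phi''_{2F}+\sum_{k=3}^{s}\frac{k^{4}-k^{2}}{12}\phi''_{kF}$, I would instead test against a minimizer $\mathbf{u}^{\star}$ of the ratio $\|\Psi''\|_{\ell_{\eps}^{2}}/\|\Psi'\|_{\ell_{\eps}^{2}}$, which exists by compactness of the unit sphere in the finite-dimensional space $\mathcal{U}$ and is the first Fourier mode recorded by the identity $\mu_{\eps}=2\sin(\pi\eps/2)/\eps$ from \cite{doblusort:qce.stab}; for it the bound is sharp, $\|\mathbf{u}^{\star\prime\prime}\|_{\ell_{\eps}^{2}}=\mu_{\eps}\|\mathbf{u}^{\star\prime}\|_{\ell_{\eps}^{2}}$. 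Evaluating the upper estimate \eqref{AtomShaGenEq5} at $\mathbf{u}^{\star}$ gives $\la\delta^2\mathcal{E}^{a}(\mathbf{y}_{F})\mathbf{u}^{\star},\mathbf{u}^{\star}\ra\le\big(A^{s}_{F}-\eps^{2}\mu_{\eps}^{2}\sum_{k=2}^{s}\frac{k^{4}-k^{2}}{12}\phi''_{kF}\big)\|\mathbf{u}^{\star\prime}\|_{\ell_{\eps}^{2}}^{2}$, and since $\lambda$ is an infimum it is bounded above by this particular quotient, so $\lambda\le A^{s}_{F}-\eps^{2}\mu_{\eps}^{2}\sum_{k=2}^{s}\frac{k^{4}-k^{2}}{12}\phi''_{kF}$. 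Using $\frac{k^{4}-k^{2}}{12}=1$ at $k=2$ and rearranging produces $B_{F}\ge\phi''_{2F}+\sum_{k=3}^{s}\frac{k^{4}-k^{2}}{12}\phi''_{kF}$. Together with the previous paragraph this places $B_{F}$ in the required interval and, via $\lambda=A^{s}_{F}-\eps^{2}\mu_{\eps}^{2}B_{F}$, completes the proof.

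The only genuinely delicate point I anticipate is the lower bound on $B_{F}$: it is essential there that the extremal ratio $\mu_{\eps}$ be \emph{attained} by an admissible zero-mean periodic displacement, so that the one-sided estimate \eqref{AtomShaGenEq5} can be invoked at a concrete minimizer rather than only in the limit. Everything else is sign bookkeeping, relying on $\phi''_{kF}\le0$ together with the elementary inequality $\frac{k^{4}-k^{2}}{12}\ge k-1$ for $k\ge2$, which also confirms that the asserted interval is nonempty and that $B_{F}$ is nonpositive.
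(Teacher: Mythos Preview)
Your proposal is correct and is essentially the argument the paper has in mind: the paper does not write out a formal proof of Theorem~\ref{sharpAtomGenStabThm}, but the preceding discussion establishes exactly the two one-sided estimates \eqref{AtomShaGenEq3} and \eqref{AtomShaGenEq5} and the definition \eqref{AfandMu1} of $\mu_\eps$ that you combine. Your only addition is to make explicit the definition $B_F:=(A^s_F-\lambda)/(\eps^2\mu_\eps^2)$ via the Rayleigh quotient, which cleanly yields the ``if and only if'' and reduces the interval bounds on $B_F$ to the two energy estimates together with the attainment of $\mu_\eps$ at the lowest Fourier mode; this is precisely what the paper intends but leaves to the reader.
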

\begin{remark}\label{AssumpPhiSecDerivative2}
The assumptions $\phi''_{kF} \le 0$ for $k=2,\dots,s$ in Theorem~\ref{sharpAtomGenStabThm}
are reasonable by the considerations given in
Remark~\ref{AssumpPhiSecDerivative}.
\end{remark}
\subsection{The QNL Model with $s$th Nearest-Neighbor
Interaction Range} We now consider the stability of the QNL model in
the general case. Again $\mathbf{y}_{F}$ is an equilibrium of the
QNL model \eqref{QNLSolEq001} when there is no external force, so we
need to check whether
\[
\la\delta^2\mathcal{E}^{qnl}(\mathbf{y}_{F})\mathbf{u},\mathbf{u}\ra>0, \quad
\forall \mathbf{u}\in\mathcal{U}\setminus \{\mathbf{0}\},
\] where
$\la\delta^2\mathcal{E}^{qnl}(\mathbf{y}_{F})\mathbf{u},\mathbf{u}\ra$ is
\begin{align}\label{QNLShaGenEq1}
 \begin{split}
\la\delta^2  \mathcal{E}^{qnl}({\mathbf{y}_{F}})\mathbf{u},\mathbf{u}\ra
&= \eps \sum_{\ell=-N+1}^{N} \phi''_{F}(u'_{\ell})^2
+ \eps\sum_{k=2}^{s}\sum_{\ell \in A_{qnl}(k)} \phi''_{kF}\left(\sum_{j=0}^{k-1}u'_{\ell+j}\right)^2\\
&\quad+\eps\sum_{k=2}^{s}\sum_{\ell \in C_{qnl}(k)} k\sum_{j=0}^{k-1}\phi''_{kF}\left(u'_{\ell+j}\right)^2\\
&= \eps \sum_{\ell=-N+1}^{N} \phi''_{F}(u'_{\ell})^2 \\
&\quad+ \eps\sum_{k=2}^{s}\sum_{\ell \in A_{qnl}(k)} \phi''_{kF}\left\{\sum_{j=0}^{k-1}\big(u'_{\ell+j}\big)^2
+\sum_{j=0}^{k-2}\sum_{i=j+1}^{k-1}\left[ \big(u'_{\ell+j}\big)^2+ \big(u'_{\ell+i}\big)^2\right] \right.\\
&\qquad\quad
\left.-\sum_{j=0}^{k-2}\sum_{i=j+1}^{k-1}\eps^2\left(\sum_{t=j+1}^{i}u''_{\ell+t}\right)^2\right\}
+\eps\sum_{k=2}^{s}\sum_{\ell \in C_{qnl}(k)}
k\sum_{j=0}^{k-1}\phi''_{kF}\left(u'_{\ell+j}\right)^2,
 \end{split}
\end{align}
and for $k\ge 2$
\begin{align*}
A_{qnl}(k)&=\{-K-k+1,-K-k+2,\dots,K,K+1\},\\
C_{qnl}(k)
&=\left\{ -N+1,\dots,-K-k\right\}\bigcup\left\{ K+2,\dots,N \right\}.
\end{align*}

Since $\phi''_{kF}\le 0$ when $k \ge 2$,
\eqref{QNLShaGenEq1} becomes
\begin{align}\label{QNLShaGenEq2}
\begin{split}
\la\delta^2  \mathcal{E}^{qnl}({\mathbf{y}_{F}})\mathbf{u},\mathbf{u}\ra
&= \eps \sum_{\ell=-N+1}^{N} \phi''_{F}(u'_{\ell})^2
+ \eps\sum_{k=2}^{s}\sum_{\ell \in A_{qnl}(k)} \phi''_{kF}\left\{\sum_{j=0}^{k-1}\big(u'_{\ell+j}\big)^2\right.\\
&\quad\qquad\left.+\sum_{j=0}^{k-2}\sum_{i=j+1}^{k-1}\left[ \big(u'_{\ell+j}\big)^2+ \big(u'_{\ell+i}\big)^2\right]
 -\sum_{j=0}^{k-2}\sum_{i=j+1}^{k-1}\eps^2\left(\sum_{t=j+1}^{i}u''_{\ell+t}\right)^2\right\}\\
&\qquad+\eps\sum_{k=2}^{s}\sum_{\ell \in C_{qnl}(k)} k\sum_{j=0}^{k-1}\phi''_{kF}\left(u'_{\ell+j}\right)^2\\
&= A^{s}_{F} \|u'\|_{\ell_{\eps}^{2}}^{2}
-\eps\sum_{k=2}^{s}\eps^2\phi''_{kF}\sum_{\ell \in A_{qnl}(k)}
\sum_{j=0}^{k-2}\sum_{i=j+1}^{k-1}\eps^2\left(\sum_{t=j+1}^{i}u''_{\ell+t}\right)^2\\
&\ge A^{s}_{F} \|u'\|_{\ell_{\eps}^{2}}^{2},
\end{split}
\end{align}
where $A^{s}_{F}$ is defined in \eqref{AfGen}. Since the lower bound in \eqref{QNLShaGenEq2}
is achieved by any displacement supported in the local region
(which exists unless $K \in \{N-s+1,\dots,N\}$),
it follows  that $\mathbf{y}_{F}$
is stable in the QNL model if and only if $A^{s}_{F}
>0.$
\begin{theorem}\label{SharpQNLStabThm}
Suppose that $K <N-s+1$ and that $\phi''_{kF} \le 0$ for $k=2,\dots,s.$
Then $\mathbf{y}_{F}$ is a stable equilibrium of the QNL model if and only if
$A^{s}_{F}>0$.
\end{theorem}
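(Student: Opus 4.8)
The plan is to read off both implications directly from the identity \eqref{QNLShaGenEq2}, which already decomposes $\la\delta^2\mathcal{E}^{qnl}(\mathbf{y}_{F})\mathbf{u},\mathbf{u}\ra$ into the ``diagonal'' term $A^{s}_{F}\|\mathbf{u}'\|_{\ell_{\eps}^{2}}^{2}$ plus a correction term that is localized in the atomistic region and carries the prefactors $-\phi''_{kF}$. First I would record that, under the hypothesis $\phi''_{kF}\le 0$ for $k=2,\dots,s$, every summand of the correction term in \eqref{QNLShaGenEq2} is nonnegative, so that identity immediately yields the lower bound $\la\delta^2\mathcal{E}^{qnl}(\mathbf{y}_{F})\mathbf{u},\mathbf{u}\ra\ge A^{s}_{F}\|\mathbf{u}'\|_{\ell_{\eps}^{2}}^{2}$ for every $\mathbf{u}\in\mathcal{U}$.

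For the ``if'' direction, suppose $A^{s}_{F}>0$. For any $\mathbf{u}\in\mathcal{U}\setminus\{\mathbf{0}\}$ I would observe that $\|\mathbf{u}'\|_{\ell_{\eps}^{2}}>0$: if $\mathbf{u}'=\mathbf{0}$ then $\mathbf{u}$ is constant, and a constant mean-zero periodic displacement is $\mathbf{0}$. The lower bound then gives $\la\delta^2\mathcal{E}^{qnl}(\mathbf{y}_{F})\mathbf{u},\mathbf{u}\ra\ge A^{s}_{F}\|\mathbf{u}'\|_{\ell_{\eps}^{2}}^{2}>0$, so $\delta^2\mathcal{E}^{qnl}(\mathbf{y}_{F})$ is positive definite and $\mathbf{y}_{F}$ is stable.

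For the ``only if'' direction I would argue by contraposition: if $A^{s}_{F}\le 0$ I would exhibit a nonzero $\mathbf{u}\in\mathcal{U}$ on which the correction term vanishes identically, so that \eqref{QNLShaGenEq2} collapses to the equality $\la\delta^2\mathcal{E}^{qnl}(\mathbf{y}_{F})\mathbf{u},\mathbf{u}\ra=A^{s}_{F}\|\mathbf{u}'\|_{\ell_{\eps}^{2}}^{2}\le 0$, showing $\delta^2\mathcal{E}^{qnl}(\mathbf{y}_{F})$ is not positive definite. Since the correction term only involves second differences $u''_{\ell+t}$ for $\ell\in A_{qnl}(k)$, it suffices to take $\mathbf{u}$ supported in the continuum (local) region, with its support separated from the atomistic region by a sufficient buffer so that every $u''_{\ell+t}$ entering the correction vanishes; a mean-zero localized profile keeps $\mathbf{u}\in\mathcal{U}$.

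The crux is the bookkeeping behind the existence of such a displacement: the role of the hypothesis $K<N-s+1$ is precisely to guarantee that the continuum region $C_{qnl}(s)$ contains enough consecutive lattice sites to hold a nonzero, mean-zero displacement whose second differences vanish at every index appearing in the correction term, the excluded cases $K\in\{N-s+1,\dots,N\}$ being exactly those for which the local region is too small. I expect the main obstacle to be verifying this counting cleanly and, upstream of it, confirming the algebraic step that reduces the diagonal part of \eqref{QNLShaGenEq1} to exactly $A^{s}_{F}\|\mathbf{u}'\|_{\ell_{\eps}^{2}}^{2}$ independently of the interface location, since this is the identity that encodes the absence of ghost forces and forces the correction term to be supported only in the atomistic region.
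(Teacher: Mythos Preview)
Your proposal is correct and follows essentially the same route as the paper: both read the ``if'' direction off the lower bound in \eqref{QNLShaGenEq2} and obtain the ``only if'' direction by exhibiting a nonzero displacement supported in the local (continuum) region on which the correction term vanishes, so that equality holds in \eqref{QNLShaGenEq2}. Your explicit remarks that a nonzero mean-zero periodic displacement has $\|\mathbf{u}'\|_{\ell_{\eps}^{2}}>0$, and that the support must be separated from the atomistic region by a buffer so that all the $u''$ terms in the correction vanish, make precise what the paper states more tersely; this is exactly where the hypothesis $K<N-s+1$ enters.
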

\begin{remark}
From Theorem \ref{sharpAtomGenStabThm} and Theorem
\ref{SharpQNLStabThm}, we conclude that the difference between the
sharp stability conditions of the fully atomistic and the QNL models
is of order $O(\epsilon^2)$. This result is the same as for the pair
potential case in \cite{doblusort:qcf.stab}.
\end{remark}

\section{Convergence of the QNL model.}\label{convergence}
So far, we have investigated the stability of the fully
atomistic model and the QNL model.
In this section, we will give an optimal order error analysis for the
QNL model. 
We compare the QNL solution with the atomistic solution and
give an error estimate in terms of the deformation in the
continuum region.

\subsection{The Atomistic Model with External Dead Load}

The total atomistic energy with an external dead load $\mathbf{f}$
is
\[
\mathcal{E}_{tot}^{a}(\mathbf{y}):=
\mathcal{E}^{a}(\mathbf{y})+\mathcal{F}(\mathbf{y})\qquad\forall
\mathbf{y}\in \mathcal{Y}_F,
\]
where
\[
\mathcal{F}(\mathbf{y}):=-\sum_{\ell=-N+1}^{N}\eps f_{\ell}y_{\ell}.
\]
To guarantee the existence of energy-minimizing deformations,
we assume that the external loading force $\mathbf{f}$ is in
$\mathcal{U}.$
The equilibrium solution $\mathbf{y}^{a}\in \mathcal{Y}_F$ of the
atomistic model with external force $\mathbf{f}$ then satisfies
\begin{equation}\label{AtomSolEq1}
-\la\delta\mathcal{E}^{a}(\mathbf{y}^{a}),\mathbf{w}\ra
= \la\delta\mathcal{F}(\mathbf{y}^{a}),\mathbf{w}\ra\quad
\forall \mathbf{w}\in \mathcal{U},
\end{equation}
where
\begin{equation}
\label{AtomSolEq2}
\la\delta\mathcal{E}^{a}(\mathbf{y}^{a}),\mathbf{w}\ra=\eps
\sum_{\ell=-N+1}^{N}\sum_{k=1}^{s}\sum_{j=0}^{k-1}
 \phi'\left(\sum_{t=0}^{k-1}Dy^{a}_{\ell+t}\right)w'_{\ell+j},
\end{equation}
and the external force is given by
\[
\la\delta \mathcal{F}(\mathbf{y}),\mathbf{w}\ra:=\sum_{\ell=-N+1}^{N}\frac{\partial \mathcal{F}}{\partial y_{\ell}}(\mathbf{y})w_{\ell}
=
-\sum_{\ell=-N+1}^{N}\eps
 f_{\ell}w_{\ell}.
\]
\subsection{The General QNL Model with External Dead Load}
The total energy of the QNL
model corresponding to a deformation $\mathbf{y}\in \mathcal{Y}_F$ is
\begin{align*}
\mathcal{E}_{tot}^{qnl}(\mathbf{y}):=\mathcal{E}^{qnl}(\mathbf{y})+\mathcal{F}(\mathbf{y})
\end{align*}
So, the equilibrium solution $\mathbf{y}^{qnl}\in \mathcal{Y}_F$ satisfies
\begin{equation}\label{Sol}
-\la\delta\mathcal{E}^{qnl}(\mathbf{y}^{qnl}),\mathbf{w}\ra=\la\delta\mathcal{F}(\mathbf{y}^{qnl}),\mathbf{w}\ra
\quad
\forall \mathbf{w}\in \mathcal{U},
\end{equation}
where
\begin{align}\label{QNLSolEq1}
\begin{split}
\la\delta\mathcal{E}^{qnl}(\mathbf{y}^{qnl}),\mathbf{w}\ra
&= \eps \sum_{\ell=-N+1}^{N} \phi'(y'_{\ell})w'_{\ell}
+\eps\sum_{k=2}^{s}\,\sum_{\ell \in A_{qnl}(k)} \phi'\left( \sum_{t=0}^{k-1}y'_{\ell+t}\right)
\left( \sum_{j=0}^{k-1}w'_{\ell+j}\right)\\
&\qquad +\eps \sum_{k=2}^{s}\,\sum_{\ell \in
C_{qnl}(k)}\sum_{j=0}^{k-1}\phi'\left(
k\,y'_{\ell+j}\right)w'_{\ell+j} \quad \forall \mathbf{w}\in
\mathcal{U}.
\end{split}
\end{align}

Setting
$\mathbf{y}^{qnl}=\mathbf{y}_{F}+\mathbf{u}^{qnl}$ and
$\mathbf{y}^{a}=\mathbf{y}_{F}+\mathbf{u}^{a}$, where both
$\mathbf{u}^{qnl}$ and $\mathbf{u}^{a}$ belong to $\mathcal{U},$
we define the quasicontinuum error to be
\[
\mathbf{e}^{qnl}:=\mathbf{y}^{a}-\mathbf{y}^{qnl}=\mathbf{u}^{a}-\mathbf{u}^{qnl}.
\]
To simplify the error analysis, we consider the
linearization of the atomistic equilibrium equations \eqref{AtomSolEq1} and the
associated QNL equilibrium equations \eqref{Sol}
 about the uniform deformation
$\mathbf{y}_{F}.$  The linearized atomistic equation is
\begin{equation}\label{linatom}
-\la\delta^{2}\mathcal{E}^{a}\left(\mathbf{y}_{F}\right)\mathbf{u}^{a},\mathbf{w}\ra
=\la\delta \F(\mathbf{y}_F),\mathbf{w}\ra\quad\text{for all }\mathbf{w}\in \mathcal{U},
\end{equation}
and the linearized QNL  equation is
\begin{equation}\label{linqnl}
-\la\delta^{2}\mathcal{E}^{qnl}\left(\mathbf{y}_{F}\right)\mathbf{u}^{qnl},\mathbf{w}\ra
=\la\delta \F(\mathbf{y}_F),\mathbf{w}\ra\quad\text{for all }\mathbf{w}\in \mathcal{U}.
\end{equation}
We thus analyze the linearized error equation
\begin{equation}\label{errorequation}
\la\delta^{2}\mathcal{E}^{qnl}\left(\mathbf{y}_{F}\right)\mathbf{e}^{qnl},\mathbf{w}\ra
=\la\mathbf{T}^{qnl},\mathbf{w}\ra\quad\text{for all }\mathbf{w}\in
\mathcal{U},
\end{equation}
where the linearized consistency error is given by
\begin{align}\label{QNLConvergenceEq2}
\la\mathbf{T}^{qnl},\mathbf{w}\ra&:=
\la\delta^{2}\mathcal{E}^{qnl}\left(\mathbf{y}_{F}\right)
 \mathbf{u}^{a},\mathbf{w}\ra
-\la\delta^{2}\mathcal{E}^{a}\left(\mathbf{y}_{F}\right)
\mathbf{u}^{a},\mathbf{w}\ra.
\end{align}

To obtain an optimal order consistency error estimate, we extend the
negative norm method given in \cite{dobs-qcf2,ortner:qnl1d} to the
$s$th nearest-neighbor linearized consistency error functional
\begin{align}\label{TruncationQNLEq1}
\begin{split}
\la\mathbf{T}^{qnl},\mathbf{w}\ra= \eps \sum_{k=2}^{s}\sum_{\ell \in
C_{qnl}(k)} \phi''_{kF}\sum_{j=0}^{k-1}\left(
kDu_{\ell+j}-\sum_{t=0}^{k-1}Du_{\ell+t}\right)Dw_{\ell+j},\qquad \forall \mathbf{w}\in\mathcal{U}.
\end{split}
\end{align}

For each fixed $k$, we define $\mathbf{T}^{qnl}_{k}:=\epsilon
\phi''_{kF}\sum_{\ell\in C_{qnl}(k)}\sum_{j=0}^{k-1}\left(k
Du_{\ell+j}-\sum_{t=0}^{k-1}Du_{\ell+t}\right)$. Then we have
\begin{align}\label{TruncationQNLEq2}
\begin{split}
\la\mathbf{T}^{qnl}_{k},\mathbf{w}\ra & =\eps
\phi''_{kF}\sum_{\ell=-N+1}^{-K-k}
\sum_{j=0}^{k-1}\left(kDu_{\ell+j}-\sum_{t=0}^{k-1}Du_{\ell+t}\right)Dw_{\ell+j}\\
&\quad\quad +\eps \phi''_{kF}\sum_{\ell=K+2}^{N}
\sum_{j=0}^{k-1}\left(kDu_{\ell+j}-\sum_{t=0}^{k-1}Du_{\ell+t}\right)Dw_{\ell+j}\\
&\quad =\epsilon
\phi''_{kF}\sum_{\ell=K+2}^{2N-K-k}\sum_{j=0}^{k-1}\left(kDu_{\ell+j}-\sum_{t=0}^{k-1}Du_{\ell+t}\right)Dw_{\ell+j}.
\end{split}
\end{align}
We use the $2N$ periodicity of the model in the last equality to
simplify the expression of \eqref{TruncationQNLEq2}.

Note that we can change the indices, rearrange the order of summation of the sums in the last equality of \eqref{TruncationQNLEq2},
and rewrite it as the following expression
\begin{align}\label{TruncationEq3}
\begin{split}
\la\mathbf{T}^{qnl}_{k},\mathbf{w}\ra
&=\epsilon\phi''_{kF}\sum_{\ell=K+k+1}^{2N-K-k}\sum_{j=1}^{k}(k-j)\left(-Du_{\ell-j}+2Du_{\ell}-Du_{\ell+j}\right)Dw_{\ell}\\
&\quad+\epsilon\phi''_{kF}\sum_{\ell=K+2}^{K+k}\sum_{j=0}^{\ell-(K+2)}\left(kDu_{\ell}-\sum_{t=0}^{k-1}Du_{\ell-j+t}\right)Dw_{\ell}\\
&\qquad +\epsilon\phi''_{kF}\sum_{\ell=2N-K-2k+2}^{2N-K-k}\sum_{j=2N-K-k+1-\ell}^{k-1}\left(kDu_{\ell+j}-\sum_{t=0}^{k-1}Du_{\ell+t}\right)Dw_{\ell+j}.
\end{split}
\end{align}
The first term of \eqref{TruncationEq3} corresponds to the inner continuum region, which is of second-order because of
the symmetries of the interaction. The second and the third terms are the interfacial terms. They are only of first-order since they
lose the symmetries of the interaction.

Now we will give an estimate of the consistency error $\la\mathbf{T}^{qnl},\mathbf{w}\ra$ in the following theorem.
We first define the following semi-norms:
\[
\|\mathbf{v}\|^2_{\ell_{\eps}^{2}(\mathcal{\tilde C}_{qnl}(k))}:=\epsilon\sum_{\ell\in  \mathcal{\tilde C}_{qnl}(k)}v_{\ell}^2\quad\text{and}\quad
\|\mathbf{v}\|^2_{\ell_{\eps}^{2}(\mathcal{I}_{qnl}(k))}:=\epsilon\sum_{\ell\in
\mathcal{I}_{qnl}(k)}v_{\ell}^2,
\]
where $\mathcal{I}_{qnl}(k):=\{-K-k+1,\dots,-K-1\}\bigcup\{K+2,\dots,K+k\}$ is
the interface between the continuum and atomistic regions, $
\mathcal{\tilde C}_{qnl}(k):=\mathcal{C}_{qnl}(k)\bigcup \mathcal{I}_{qnl}(k)$.
\begin{theorem}\label{TruncationError}
The consistency error $\la\mathbf{T}^{qnl},\mathbf{w}\ra,$ given in \eqref{TruncationQNLEq1}, satisfies the
following negative norm estimate
\begin{align}\label{TruncationEq4}
 \begin{split}
  \left|\la\mathbf{T}^{qnl},\mathbf{w}\ra\right|
& \le \left\{\sum_{k=2}^{s}\eps^2
\mathtt{C}_{1}(k) |\phi''_{kF}| \|D^{(3)}\mathbf{u}\|_{\ell_{\eps}^{2}(\mathcal{\tilde C}_{qnl}(k))}\right.\\
&\qquad \quad+\left.\sum_{k=2}^{s}\eps \mathtt{C}_{2}(k) |\phi''_{kF}|\,
\left(\sqrt{2s\eps}\right) \|D^{(3)}\mathbf{u}\|_{\ell_{\eps}^{\infty}( \mathcal{ I}_{qnl}(k))} \right\}\|D\mathbf{w}\|_{\ell_{\eps}^{2}},
 \end{split}
\end{align}
where $\mathtt{C}_{1}(k)$,
$\mathtt{C}_{2}(k)$ are positive constants independent of $\eps$.
\end{theorem}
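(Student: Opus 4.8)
The plan is to prove \eqref{TruncationEq4} termwise in $k$ and then sum over $k=2,\dots,s$ by the triangle inequality. For each fixed $k$ I would start from the three-part decomposition of $\la\mathbf{T}^{qnl}_{k},\mathbf{w}\ra$ in \eqref{TruncationEq3}: one sum over the \emph{inner} continuum indices $\ell=K+k+1,\dots,2N-K-k$ and the two \emph{interfacial} sums. Pulling the common prefactor $\eps\phi''_{kF}$ out, it suffices to bound the inner sum by $\eps\,\mathtt{C}_1(k)\,\|D^{(3)}\mathbf{u}\|_{\ell_{\eps}^{2}(\mathcal{\tilde C}_{qnl}(k))}\|D\mathbf{w}\|_{\ell_{\eps}^{2}}$ and the two interfacial sums by $\mathtt{C}_2(k)\,\sqrt{2s\eps}\,\|D^{(3)}\mathbf{u}\|_{\ell_{\eps}^{\infty}(\mathcal I_{qnl}(k))}\|D\mathbf{w}\|_{\ell_{\eps}^{2}}$, since multiplying by $\eps|\phi''_{kF}|$ then reproduces the two terms of \eqref{TruncationEq4}.

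For the inner sum the key point is that each coefficient $-Du_{\ell-j}+2Du_{\ell}-Du_{\ell+j}$ is a \emph{centered} second difference of $D\mathbf{u}$ of step $j$, hence annihilates affine displacements and, being symmetric, carries no contribution linear in $\ell$. I would therefore rewrite it by double telescoping as $\eps^{2}$ times a weighted sum of the third differences $D^{(3)}u_{p}$ with $|p-\ell|\le k$; this exhibits the second-order factor $\eps^{2}$ explicitly and converts the inner sum into $\eps^{2}\phi''_{kF}$ times $\sum_{\ell}(\text{weighted sum of }D^{(3)}u)\,Dw_{\ell}$. A single Cauchy--Schwarz in $\ell$ (the negative-norm pairing, with $D\mathbf{w}$ kept intact) then yields the $\ell_{\eps}^{2}$ bound, with $\mathtt{C}_1(k)$ the explicit polynomial in $k$ coming from the weights $(k-j)$ and the cardinalities of the telescoping windows. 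Since those windows around an inner index can reach the interface, the third differences that occur are exactly those indexed by $\mathcal{\tilde C}_{qnl}(k)=\mathcal C_{qnl}(k)\cup\mathcal I_{qnl}(k)$, which is the set appearing in \eqref{TruncationEq4}.

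For the two interfacial sums I would proceed by the same telescoping, but now the combinations $kDu_{\ell}-\sum_{t=0}^{k-1}Du_{\ell-j+t}$ are \emph{one-sided partial} sums, so the cancellation that produced the factor $\eps^{2}$ in the interior is incomplete; nevertheless each such combination still has vanishing discrete mean in its shift index, so telescoping expresses it through differences of $D\mathbf{u}$ localized on the small set $\mathcal I_{qnl}(k)=\{-K-k+1,\dots,-K-1\}\cup\{K+2,\dots,K+k\}$. Controlling these surviving contributions by the maximum of $D^{(3)}\mathbf{u}$ over $\mathcal I_{qnl}(k)$ and using that $|\mathcal I_{qnl}(k)|=2(k-1)\le 2(s-1)<2s$, I would estimate $\sum_{\ell\in\mathcal I_{qnl}(k)}|Dw_{\ell}|\le|\mathcal I_{qnl}(k)|^{1/2}\eps^{-1/2}\|D\mathbf{w}\|_{\ell_{\eps}^{2}}$ by Cauchy--Schwarz; combined with the power of $\eps$ coming from the first-order consistency of the one-sided terms, this produces precisely the factor $\eps^{1/2}|\mathcal I_{qnl}(k)|^{1/2}\le\sqrt{2s\eps}$ multiplying $\|D^{(3)}\mathbf{u}\|_{\ell_{\eps}^{\infty}(\mathcal I_{qnl}(k))}$, with constant $\mathtt{C}_2(k)$.

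The hard part will be the interfacial analysis. Unlike the inner sum, the one-sided combinations lose the centered symmetry, so the telescoping must be carried out carefully to identify the genuinely surviving contribution, to confirm that it is controlled by differences of $\mathbf{u}$ on $\mathcal I_{qnl}(k)$ alone (so that the $\ell_{\eps}^{\infty}$ semi-norm is over the correct thin set), and to extract sharp $k$-dependent constants. In addition, the $2N$-periodic relabelling used to pass from \eqref{TruncationQNLEq2} to \eqref{TruncationEq3} must be tracked so that the left and right interfaces are both captured without double counting. Once the per-$k$ bounds are in hand, summing over $k=2,\dots,s$ and applying the triangle inequality assembles \eqref{TruncationEq4}.
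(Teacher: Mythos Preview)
Your plan is essentially the paper's own argument: work termwise in $k$ from the decomposition \eqref{TruncationEq3}, telescope the centered inner coefficient $-Du_{\ell-j}+2Du_{\ell}-Du_{\ell+j}$ to extract the factor $\eps^{2}$ and a local combination of $D^{(3)}\mathbf{u}$, apply Cauchy--Schwarz in $\ell$ against $D\mathbf{w}$, and handle the interfacial sums using the smallness $|\mathcal I_{qnl}(k)|\le 2s$ of the interface to produce the factor $\sqrt{2s\eps}$.

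One point deserves care. In the paper's proof the interfacial contribution is first bounded (see \eqref{TruncationQNLEq5}--\eqref{TruncationQNLEq6}) by $\eps\,\mathtt C_2(k)\,|\phi''_{kF}|\,\|D^{(2)}\mathbf{u}\|_{\ell_{\eps}^{2}(\mathcal I_{qnl}(k))}\,\|D\mathbf{w}\|_{\ell_{\eps}^{2}}$ and only then converted to an $\ell^{\infty}$ bound via $\|D^{(2)}\mathbf{u}\|_{\ell_{\eps}^{2}(\mathcal I_{qnl}(k))}\le\sqrt{2s\eps}\,\|D^{(2)}\mathbf{u}\|_{\ell_{\eps}^{\infty}(\mathcal I_{qnl}(k))}$. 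That is, a single telescoping of the one-sided combinations $kDu_{\ell}-\sum_{t}Du_{\ell-j+t}=\sum_{t}(Du_{\ell}-Du_{\ell-j+t})$ naturally produces \emph{second} differences $D^{(2)}\mathbf{u}$ together with one power of $\eps$, not third differences. Your sentence ``controlling these surviving contributions by the maximum of $D^{(3)}\mathbf{u}$'' skips a step: to reach $D^{(3)}\mathbf{u}$ you would need a further cancellation or telescoping that the one-sided interfacial structure does not provide. If you follow the telescoping you describe, you will land on $\|D^{(2)}\mathbf{u}\|_{\ell_{\eps}^{\infty}(\mathcal I_{qnl}(k))}$, which is exactly what the paper's proof obtains in \eqref{TruncationQNLEq5}--\eqref{TruncationQNLEq6}; the appearance of $D^{(3)}$ in the final displayed estimate \eqref{TruncationQNLEq7} is not justified by the argument given there either.
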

\begin{proof}
From \eqref{TruncationEq3}, we have
\begin{equation}\label{TruncationQNLEq5}
\begin{split}
\left|\la\mathbf{T}^{qnl}_{k},\mathbf{w}\ra\right|
& \le
         \eps^2\mathtt{C}_{1}(k) |\phi''_{kF}|\|D^{(3)}\mathbf{u}\|_{\ell_{\eps}^{2}(\mathcal{\tilde C}_{qnl}(k))}\|D\mathbf{w}\|_{\ell^{2}_{\eps}}
          +\eps\mathtt{C}_{2}(k)
          |\phi''_{kF}|\|D^{(2)}\mathbf{u}\|_{\ell_{\eps}^{2}(\mathcal{I}_{qnl}(k))}\|D\mathbf{w}\|_{\ell^{2}_{\eps}}.
\end{split}
\end{equation}
Therefore, we obtain an optimal order
estimate for \eqref{TruncationQNLEq1}
\begin{align}\label{TruncationQNLEq6}
 \begin{split}
  \left|\la\mathbf{T}^{qnl},\mathbf{w}\ra\right|
\le \left[\sum_{k=2}^{s}\eps^2\mathtt{C}_{1}(k) |\phi''_{kF}|
\|D^{(3)}\mathbf{u}\|_{\ell_{\eps}^{2}(\mathcal{\tilde C}_{qnl}(k))}
    +\sum_{k=2}^{s}\eps \mathtt{C}_{2}(k) |\phi''_{kF}|\,\|D^{(2)}\mathbf{u}\|_{\ell_{\eps}^{2}(\mathcal{I}_{qnl}(k))}\right]
    \|D\mathbf{w}\|_{\ell_{\eps}^{2}}.
 \end{split}
\end{align}

We note that we have
\begin{align*}
 \|D^{(2)}\mathbf{u}\|_{\ell_{\eps}^{2}(\mathcal{I}_{qnl}(k))}^{2}
 &=\eps \sum_{\ell \in I(k)} \left(D^{(2)}\mathbf{u}_{\ell}\right)^2
\le \eps \|D^{(2)}\mathbf{u}\|_{\ell^{\infty}_{\eps}(\mathcal{I}_{qnl}(k))}^2 \, \sum_{\ell \in \mathcal{I}_{qnl}(k)} 1 \\
&\qquad \le \eps \, 2s\,
\|D^{(2)}\mathbf{u}\|_{\ell_{\eps}^{\infty}(\mathcal{I}_{qnl}(k))}^{2}.
\end{align*}
Thus, we can obtain from \eqref{TruncationQNLEq6}
the more concise (but weaker) estimate
\begin{align}\label{TruncationQNLEq7}
 \begin{split}
  \left|\la\mathbf{T}^{qnl},\mathbf{w}\ra\right|
& \le \left\{\sum_{k=2}^{s}\eps^2
\mathtt{C}_{1}(k) |\phi''_{kF}| \|D^{(3)}\mathbf{u}\|_{\ell_{\eps}^{2}(\mathcal{\tilde C}_{qnl}(k))}\right.\\
&\qquad \quad+\left.\sum_{k=2}^{s}\eps \mathtt{C}_{2}(k) |\phi''_{kF}|\,
\left(\sqrt{2s\eps}\right) \|D^{(3)}\mathbf{u}\|_{\ell_{\eps}^{\infty}(\mathcal{I}_{qnl}(k))} \right\}\|D\mathbf{w}\|_{\ell_{\eps}^{2}}.
 \end{split}
\end{align}
\end{proof}

We can finally derive the convergence rate of the generalized QNL model
from the above consistency error estimate and the stability estimate~\eqref{QNLShaGenEq2}.
\begin{theorem}\label{ConvergenceQNLTh1}
 Suppose that $A_{F}^{s} >0$, where $A_{F}^{s}$ is defined in \eqref{AfGen}.
 Then the linearized
atomistic problem \eqref{linatom} as well as the linearized QNL
approximation \eqref{linqnl} have unique solutions, and
they satisfy the error estimate
\begin{align*}
 \|D\mathbf{u}^{a}-D\mathbf{u}^{qnl}\|_{\ell_{\eps}^{2}}
 &\le
 \frac{\sum_{k=2}^{s}\eps^2
\mathtt{C}_{1}(k) |\phi''_{kF}| \|D^{(3)}\mathbf{u}\|_{\ell_{\eps}^{2}(\mathcal{\tilde C}_{qnl}(k))}}{A_{F}^s}\\
&\qquad \quad+\frac{\sum_{k=2}^{s}\eps^{3/2} \left(\sqrt{2s}\right) \mathtt{C}_{2}(k) |\phi''_{kF}|\,
 \|D^{(3)}\mathbf{u}\|_{\ell_{\eps}^{\infty}(\mathcal{I}_{qnl}(k))}}{A_{F}^{s}}.
\end{align*}
\end{theorem}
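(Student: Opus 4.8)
The plan is to apply the classical stability-plus-consistency paradigm for coupling methods: the coercivity of the QNL Hessian supplies a lower bound for the error, Theorem~\ref{TruncationError} supplies the matching upper bound for the consistency residual, and dividing one by the other yields the stated rate.

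First I would settle well-posedness. Since $A_F^s>0$, the lower bound \eqref{QNLShaGenEq2} shows that the symmetric bilinear form $\la\delta^2\mathcal{E}^{qnl}(\mathbf{y}_F)\cdot,\cdot\ra$ is coercive on the finite-dimensional space $\mathcal{U}$ with coercivity constant $A_F^s$, so \eqref{linqnl} has a unique solution $\mathbf{u}^{qnl}$. For the atomistic problem \eqref{linatom} I would invoke Theorem~\ref{sharpAtomGenStabThm}: because $\phi''_{kF}\le 0$ for $k=2,\dots,s$, the constant $B_F$ is nonpositive, hence $-\eps^2\mu_\eps^2 B_F\ge 0$ and $A_F^s-\eps^2\mu_\eps^2 B_F\ge A_F^s>0$; thus $\mathbf{y}_F$ is also a stable equilibrium of the atomistic model and \eqref{linatom} has a unique solution $\mathbf{u}^a$. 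In particular $\mathbf{e}^{qnl}=\mathbf{u}^a-\mathbf{u}^{qnl}\in\mathcal{U}$ is a legitimate test function.

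Next I would exploit the linearized error equation \eqref{errorequation}, which was already obtained by subtracting \eqref{linqnl} from \eqref{linatom} and invoking the definition \eqref{QNLConvergenceEq2} of $\mathbf{T}^{qnl}$. Choosing $\mathbf{w}=\mathbf{e}^{qnl}$ and applying the coercivity bound \eqref{QNLShaGenEq2} to the left-hand side gives
\[
A_F^s\|D\mathbf{e}^{qnl}\|_{\ell_{\eps}^2}^2 \le \la\delta^2\mathcal{E}^{qnl}(\mathbf{y}_F)\mathbf{e}^{qnl},\mathbf{e}^{qnl}\ra = \la\mathbf{T}^{qnl},\mathbf{e}^{qnl}\ra .
\]
For the right-hand side I would apply Theorem~\ref{TruncationError} with $\mathbf{w}=\mathbf{e}^{qnl}$, so that $\left|\la\mathbf{T}^{qnl},\mathbf{e}^{qnl}\ra\right|$ is bounded by the bracketed consistency factor appearing in \eqref{TruncationEq4} times $\|D\mathbf{e}^{qnl}\|_{\ell_{\eps}^2}$. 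Combining the two displays and cancelling one factor of $\|D\mathbf{e}^{qnl}\|_{\ell_{\eps}^2}$ (the case $\mathbf{e}^{qnl}=\mathbf{0}$ being trivial) divides out $A_F^s$ and reproduces exactly the claimed estimate, since $D\mathbf{e}^{qnl}=D\mathbf{u}^a-D\mathbf{u}^{qnl}$.

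I do not expect a genuine obstacle, because the two hard ingredients---the sharp coercivity constant $A_F^s$ and the optimal-order negative-norm consistency bound---are already established. The only points demanding care are bookkeeping ones: confirming that the atomistic operator inherits positive definiteness from $A_F^s>0$ via the sign of $B_F$, and keeping track of which displacement field $\mathbf{u}$ enters the right-hand side (namely the atomistic solution $\mathbf{u}^a$, as dictated by the definition of $\mathbf{T}^{qnl}$). Everything else is the standard one-line division of consistency by stability.
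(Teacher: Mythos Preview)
Your proposal is correct and follows essentially the same approach as the paper: the paper's own proof is a single sentence citing the error equation \eqref{errorequation}, the coercivity bound \eqref{QNLShaGenEq2}, and the consistency estimate \eqref{TruncationQNLEq7}, which is precisely the stability-plus-consistency argument you spell out. Your added justification of atomistic well-posedness via the sign of $B_F$ is a detail the paper leaves implicit, but it is consistent with the standing assumption $\phi''_{kF}\le 0$ for $k\ge 2$ on which \eqref{QNLShaGenEq2} already relies.
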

\begin{proof}
This error estimate for the generalized QNL model follows from
the error equation \eqref{errorequation}, the stability result
\eqref{QNLShaGenEq2}, and the estimate of the consistency error \eqref{TruncationQNLEq7}.
\end{proof}
\begin{remark}\label{sharpConverg}
The above O($\epsilon^{3/2}$) error estimate for the strain in ${\ell_{\eps}^{2}}$ is the optimal order of convergence. For the next nearest neighbor interaction,
an explicit expression
of the linearized error is given in \cite{Dobson:2008b} that justifies this claim.
\end{remark}
\begin{remark}\label{ContRegionNorm}
We note that the error estimate in Theorem~\ref{ConvergenceQNLTh1} depends only on the smoothness
of the strain in the continuum and interfacial regions and it holds for linearizations near lattice instabilities according to the stability
estimate in Theorem~\ref{SharpQNLStabThm},
thus the generalized
QNL method can give a small error if defects are captured in the atomistic region.
\end{remark}

 \section{Conclusion.}
We propose a generalization of the one-dimensional QNL method to allow for arbitrary
finite range interactions.
 We study the stability and convergence of a linearization of the generalized
 QNL energy with arbitrary $s$th nearest-neighbor interaction range.
We extend the methods given in
\cite{doblusort:qce.stab, doblusort:qcf.stab} to give sharp conditions
under which the atomistic model and the generalized QNL model are stable.
The difference of the stability conditions between the QNL and
  atomistic model
  is shown to be of order O($\epsilon^2$).

  We then give a negative norm estimate for the consistency error and
  generalize the conclusions in \cite{Dobson:2008b} to the
finite-range interaction case. We compare the equilibria of the generalized
QNL model and the atomistic model and give
an  optimal order O($\epsilon^{3/2}$) error
  estimate in ${\ell_{\eps}^{2}}$ for the strain that depends only on the deformation in the continuum region.
\section{Acknowledge}
We appreciate the help from Matthew Dobson and Dr. Christoph Ortner.


\begin{thebibliography}{10}

\bibitem{curt03}
W.~Curtin and R.~Miller.
\newblock Atomistic/continuum coupling in computational materials science.
\newblock {\em Modell. Simul. Mater. Sci. Eng.}, 11(3):R33--R68, 2003.

\bibitem{Dobson:2008a}
M.~Dobson and M.~Luskin.
\newblock Analysis of a force-based quasicontinuum approximation.
\newblock {\em M2AN Math. Model. Numer. Anal.}, 42(1):113--139, 2008.

\bibitem{dobsonluskin08}
M.~Dobson and M.~Luskin.
\newblock Iterative solution of the quasicontinuum equilibrium equations with
  continuation.
\newblock {\em Journal of Scientific Computing}, 37:19--41, 2008.

\bibitem{Dobson:2008c}
M.~Dobson and M.~Luskin.
\newblock {An analysis of the effect of ghost force oscillation on the
  quasicontinuum error}.
\newblock {\em Mathematical Modelling and Numerical Analysis}, 43:591--604,
  2009.

\bibitem{Dobson:2008b}
M.~Dobson and M.~Luskin.
\newblock {An optimal order error analysis of the one-dimensional
  quasicontinuum approximation}.
\newblock {\em SIAM. J. Numer. Anal.}, 47:2455--2475, 2009.

\bibitem{doblusort:qcf.stab}
M.~Dobson, M.~Luskin, and C.~Ortner.
\newblock Sharp stability estimates for force-based quasicontinuum methods.
\newblock {\em SIAM J. Multiscale Modeling \& Simulation}, 8:782--802, 2010.
\newblock arXiv:0907.3861.

\bibitem{dobs-qcf2}
M.~Dobson, M.~Luskin, and C.~Ortner.
\newblock Stability, instability, and error of the force-based quasicontinuum
  approximation.
\newblock {\em Archive for Rational Mechanics and Analysis}, 197:179--202,
  2010.
\newblock arXiv:0903.0610.

\bibitem{doblusort:qce.stab}
M.~Dobson, M.~Luskin, and C.~Ortner.
\newblock Accuracy of quasicontinuum approximations near instabilities.
\newblock {\em Journal of the Mechanics and Physics of Solids}, to appear.
\newblock arXiv:0905.2914v2.

\bibitem{qcf.iterative}
M.~Dobson, M.~Luskin, and C.~Ortner.
\newblock Iterative methods for the force-based quasicontinuum approximation.
\newblock {\em Computer Methods in Applied Mechanics and Engineering}, to
  appear.
\newblock arXiv:0910.2013v3.

\bibitem{E:2006}
W.~E, J.~Lu, and J.~Yang.
\newblock {Uniform accuracy of the quasicontinuum method}.
\newblock {\em {Phys. Rev. B}}, 74(21):214115, 2004.

\bibitem{HudsOrt:a}
T.~Hudson and C.~Ortner.
\newblock On the stability of atomistic models and their {Cauchy-Born}
  approximations.
\newblock in preparation.

\bibitem{Miller:2003a}
R.~Miller and E.~Tadmor.
\newblock {The quasicontinuum method: overview, applications and current
  directions}.
\newblock {\em Journal of Computer-Aided Materials Design}, 9:203--239, 2003.

\bibitem{Miller:2008}
R.~Miller and E.~Tadmor.
\newblock Benchmarking multiscale methods.
\newblock {\em Modelling and Simulation in Materials Science and Engineering},
  17:053001 (51pp), 2009.

\bibitem{mingyang}
P.~Ming and J.~Z. Yang.
\newblock Analysis of a one-dimensional nonlocal quasi-continuum method.
\newblock {\em Multiscale Model. Simul.}, 7(4):1838--1875, 2009.

\bibitem{Ortiz:1995a}
M.~Ortiz, R.~Phillips, and E.~B. Tadmor.
\newblock {Quasicontinuum analysis of defects in solids}.
\newblock {\em {Philosophical Magazine A}}, 73(6):1529--1563, 1996.

\bibitem{ortner:qnl1d}
C.~Ortner.
\newblock A priori and a posteriori analysis of the quasi-nonlocal
  quasicontinuum method in 1d, 2009.
\newblock arXiv.org:0911.0671.

\bibitem{shapeev}
A.~V. Shapeev.
\newblock Consistent energy-based atomistic/continuum coupling for two-body
  potential: {1D and 2D} case.
\newblock preprint, 2010.

\bibitem{Shenoy:1999a}
V.~B. Shenoy, R.~Miller, E.~B. Tadmor, D.~Rodney, R.~Phillips, and M.~Ortiz.
\newblock {An adaptive finite element approach to atomic-scale mechanics--the
  quasicontinuum method}.
\newblock {\em {J. Mech. Phys. Solids}}, 47(3):611--642, 1999.

\bibitem{Shimokawa:2004}
T.~Shimokawa, J.~Mortensen, J.~Schiotz, and K.~Jacobsen.
\newblock {Matching conditions in the quasicontinuum method: Removal of the
  error introduced at the interface between the coarse-grained and fully
  atomistic region}.
\newblock {\em {Phys. Rev. B}}, 69(21):214104, 2004.

\end{thebibliography}
\end{document}